\newtheorem{Theorem}{Theorem}[section]
\newtheorem{Lemma}[Theorem]{Lemma}
\newtheorem{Cor}[Theorem]{Corollary}
\newtheorem{Prop}[Theorem]{Proposition}
\theoremstyle{definition}
\newtheorem{Def}[Theorem]{Definition}
\newtheorem{Rem}[Theorem]{Remark}
\newtheorem{Ex}[Theorem]{Example}
\DeclareMathOperator{\Syz}{Syz}
\DeclareMathOperator{\Sym}{Sym}
\DeclareMathOperator{\Spec}{Spec}
\DeclareMathOperator{\depth}{depth}
\DeclareMathOperator{\rank}{rank}
\DeclareMathOperator{\Hom}{Hom}
\DeclareMathOperator{\freerank}{frk}
\DeclareMathOperator{\chara}{char}
\newcommand{\kk}{k}            %%%%%% Notazione per il campo residuo
\newcommand{\mmodgr}[1]{\mathrm{mod}_{\mathbb{Z}}(#1)}   %%%% Notazione per la categoria degli R-moduli graduati
\newcommand{\proj}[1]{\mathrm{proj}_{\mathbb{Z}}(#1)}   %%%% Notazione per la categoria degli R-moduli proiettivi
\newcommand{\Gl}[1]{\mathrm{GL}(#1)}   %%%% Notazione per Gl
\newcommand{\Sl}[1]{\mathrm{SL}(#1)}   %%%% Notazione per Sl
\newcommand{\dd}{\mathrm{d}}  %%% Notazione per le mappe differenziali
\newcommand{\fundspace}{F}
\title{Differential symmetric signature in high dimension}
\author{Holger Brenner} 
\address{{\small Holger Brenner, Institut f\"ur Mathematik, Universit\"at Osnabr\"uck, Albrechtstrasse 28a, 49076 Osnabr\"uck, Germany}}
\email{{\small holger.brenner@uni-osnabrueck.de}}
\author{Alessio Caminata}
\thanks{The second author is supported by European Union's Horizon 2020 research and innovation programme under grant agreement No 701807.}
\address{{\small Alessio Caminata, Institut de Matem\`{a}tica, Universitat de Barcelona\\ Gran Via de les Corts Catalanes 585, 08007 Barcelona, Spain}}
\email{{\small caminata@ub.edu}}
\begin{document}

\thanks{\textit{Mathematics Subject Classification (2010)}: 13A50, 13D40, 13N05
\\ \indent \textit{Keywords and phrases:} F-signature, symmetric signature, quotient singularities, K\"ahler differentials}

\begin{abstract}
We study the differential symmetric signature, an invariant of rings of finite type over a field, introduced in a previous work by the authors in an attempt to find a characteristic-free analogue of the F-signature.
We compute the differential symmetric signature for invariant rings $\kk[x_1,\dots,x_n]^G$ where $G$ is a finite small subgroup of $\Gl{n,\kk}$ and for hypersurface rings $\kk[x_1,\dots,x_n]/(f)$ of dimension $\geq3$ with an isolated singularity.
In the first case, we obtain the value $1/|G|$, which coincides with the F-signature and generalizes a previous result of the authors for the two-dimensional case. In the second case, following an argument by Bruns, we obtain the value $0$, providing an example of a ring where differential symmetric signature and F-signature are different.
\end{abstract}

\maketitle

%%%%%%%%%%%%%%%% SECTION %%%%%%%%%%%%%
\section*{Introduction}

\par In the seminal paper \cite{Kun69}, Kunz introduces the idea of studying singularities in positive characteristic by looking at numerical functions defined using the Frobenius homomorphism.
Given a Noetherian local ring $(R,\mathfrak{m},\kk)$ containing a field of positive characteristic, he studies the function $q\mapsto\ell_R(R/\mathfrak{m}^{[q]})$, which is now called \emph{Hilbert-Kunz function} and uses it to characterize the regularity of the ring $R$. Here $q$ is a power of the characteristic and $\mathfrak{m}^{[q]}$ is the Frobenius power of the maximal ideal.

\par Smith and Van den Bergh \cite{SVB96}, and Seibert \cite{Sei97} take a slight shift and look instead at the asymptotic splitting behaviour of the Frobenius endomorphism. 
More precisely, we consider  $R$ as $R$-module with the multiplicative structure given by the $e$-th iteration of the Frobenius homomorphism.
We denote this module by $^e\!R$, and look at decompositions $^e\!R\cong R^{a_e}\oplus M_e$, where $M_e$ does not contain free $R$-summands.
The integer $a_e$ is called \emph{free rank} of $^e\!R$ and denoted by $\freerank_R{^e\!R}$.
Smith and Van den Bergh, and Seibert study the asymptotic behaviour  of the sequence
\begin{equation}\label{eq-Fsignaturesequence}
\left\{
\frac{\freerank_R{^e\!R}}{\rank_R{^e\!R}}\right\}_{e\in\mathbb{N}}
\end{equation}
where $\rank_R{^e\!R}$ is the usual rank of $R$-modules, and prove that it converges if $R$ is of finite Cohen-Macaulay type (Seibert) or of finite F-representation type and strongly F-regular (Smith and Van den Bergh).

\par Huneke and Leuschke \cite{HL02} continue this approach  and study the previous sequence also for larger classes of rings.
They define the \emph{F-signature} of  $R$ as the limit of the sequence \eqref{eq-Fsignaturesequence} for $e\rightarrow+\infty$ and prove the existence of the limit in several cases.
We mention that in the same years, Watanabe and Yoshida \cite{WY04} define another numerical invariant,  the \emph{minimal relative Hilbert-Kunz multiplicity}, that turns out to be equivalent to the F-signature \cite{Yao06}.

\par The F-signature has drawn the attention of several researchers, and actually it proves to be an important numerical invariant of the ring which provides delicate information concerning its singularities.
To mention a few results, we recall that the existence of the limit is known in greater generality \cite{Tuc12}, then one has that $s(R)=1$ if and only if the ring is regular \cite{HL02, WY00}, and $s(R)>0$ if and only if $R$ is strongly F-regular \cite{AL03}.

\par Given the importance of F-signature, it seems a natural step to study the ratio $\freerank_R/\rank_R$ also for other families of $R$-modules, not necessarily coming from the Frobenius homomorphism.
With this motivation in mind, in a previous work  \cite{BC17} the authors introduce the differential symmetric signature.
\par Let $R$ be a normal domain of finite type over a field $\kk$ (of any characteristic).
For any natural number $q$ we consider $q$-th reflexive symmetric powers
\begin{equation*}
\mathcal{C}^q=\left(\Sym_R^q(\Omega_{R/\kk})\right)^{**}
\end{equation*}
of the module of K\"ahler differentials $\Omega_{R/\kk}$ of the ring $R$ over $\kk$ and study the asymptotic behaviour of the ratio $\freerank_R/\rank_R$ for this class of modules, rather than for $^e\!R$ as it is for the $F$-signature.
If the limit
\[ 
\lim_{N\rightarrow+\infty} \frac{\sum_{q=0}^N\freerank_R\mathcal{C}^q}{\sum_{q=0}^N\rank_R\mathcal{C}^q}
\]
exists, it is called the \emph{differential symmetric signature} of $R$ and denoted by $s_{\dd\sigma}(R)$.

\par In the paper \cite{BC17}, the authors compute the differential symmetric signature of two classes of rings: two-dimensional Kleinian singularities and cones over elliptic curves.
In both cases, the values obtained coincide with the F-signature of such rings in positive characteristic.
However, the results and the methods  there are limited to the two-dimensional situation.
In this paper, we compute the differential symmetric signature of two large classes of $d$-dimensional rings for $d\geq3$: quotient singularities and hypersurfaces with an isolated singularity.

\par A quotient singularity is an invariant ring of the form $\kk[x_1,\dots,x_n]^G$, where $G\subseteq \Gl{n,\kk}$ is a finite small group, i.e. it contains no pseudo-reflections, whose order is coprime to the characteristic of the base field $\kk$.
Quotient singularities are normal and Cohen-Macaulay.
Watanabe and Yoshida \cite{WY04} proved that in positive characteristic the F-signature of such rings is equal to $\frac{1}{|G|}$, provided that $\chara\kk\nmid |G|$, where $|G|$ is the order of the group.
\par Kleinian singularities are special two-dimensional singularities, that is $G\subseteq\Sl{2,\kk}$.
As mentioned before, the authors \cite{BC17}  proved that the differential symmetric signature of Kleinian singularities is also $\frac{1}{|G|}$,  if  $\chara\kk\nmid|G|$.

\par In this paper, we generalize the latter result to higher dimension and to the whole class of quotient singularities.
Namely, we prove in Theorem \ref{theorem:signatureofquotientsingularities} that 
\begin{equation*}
s_{\dd\sigma}\left(\kk[x_1,\dots,x_n]^G\right)=\frac{1}{|G|}
\end{equation*}
for any $n\geq2$ and any finite small group $G\subseteq\Gl{n,\kk}$ such that $\chara\kk\nmid|G|$.
Notice that we allow the characteristic of $\kk$ to be $0$.
Thus, we obtain an analogue for the differential symmetric signature to the result of Watanabe and Yoshida.

\par The second main result of this paper is the computation of the differential symmetric signature for domains of the form $R=\kk[x_1,\dots,x_n]/(f)$, where $n\geq4$ and $f$ is a non-zero homogeneous polynomial whose degree is at least $2$ and is not a multiple of $\chara\kk$.
The ring $R$ is the coordinate ring of a hypersurface  in $n$-dimensional affine space over $\kk$, and we assume that it has an isolated singularity.
Using ideas suggested by Bruns, we prove in Theorem \ref{theorem:freerankiszero} that the differential symmetric signature of $R$ is $0$.
Then, we use this result to provide an example of a ring where the differential symmetric signature and the F-signature are different (Example \ref{ex:differentsignatures}).

\par The structure of the paper is the following.
In Section \ref{section-preliminaries}, we review the definition of differential symmetric signature together with some known results.
In Section \ref{section-invariants}, we compute the differential symmetric signature of quotient singularities (Theorem \ref{theorem:signatureofquotientsingularities}) .
In Section \ref{section-hypersurfaces}, we concentrate on normal hypersurfaces with an isolated singularity and we prove that their differential symmetric signature is zero (Theorem \ref{theorem:freerankiszero}).

%%%%%%%%%%%%%%% SECTION %%%%%%%%%%%%%%
\section{Preliminaries}\label{section-preliminaries}

\par We recall the definition of differential symmetric signature from \cite{BC17}.
Since in this paper we are mainly interested in graded rings, we restrict to this setting from the beginning, although the differential symmetric signature can be defined for local rings in a similar way.
\par Let $\kk$ be an algebraically closed field, and let $R$ be a $\mathbb{Z}$-graded Noetherian $\kk$-domain of dimension $d$.
We will work in the category $\mmodgr{R}$ of graded $R$-modules. 
Given a finitely generated graded module $M$ we denote by $\rank_R M$ the \emph{rank} of $M$, and by $\freerank_R M$ the \emph{free rank} of $M$, that is

\begin{equation}\label{eq:freerank}\begin{split}
\freerank_R M:=\max\{n\in\mathbb{N}: \ \exists &\text{ a homogeneous of degree }0\text{ surjection }\varphi:M\twoheadrightarrow F, \\ &\text{ with } F \text{ free graded }R\text{-module of rank }n  \}.
\end{split}\end{equation}
Since the category $\mmodgr{R}$ is Krull-Schmidt \cite{Ati56}, if $M$ has finite free rank, then we can write it as $M\cong R^{\freerank_RM}\oplus N$, where the module $N$ has no free direct $R$-summands.
In particular, it follows that $\freerank_RM\leq\rank_RM$.

\par We denote by $\Omega_{R/\kk}$ the \emph{module of K\"ahler differentials} of $R$ over $\kk$, and by $(-)^{*}$ the functor $\Hom_R(-,R)$.
The reflexive hull $\Omega_{R/\kk}^{**}$ is called \emph{module of Zariski differentials} of $R$ over $\kk$.
We recall that under some mild conditions we have that $\Omega_{R/\kk}$ is free if and only if $R$ is regular.
For every natural number $q$, we define the graded $R$-module
\begin{equation*}
\mathcal{C}^q:=\left(\Sym^q_R(\Omega_{R/\kk})\right)^{**}.
\end{equation*}
 
\begin{Def}
	The real number
	\begin{equation*}
	 s_{\dd\sigma}(R):=\lim_{N\rightarrow+\infty}\frac{\sum_{q=0}^N\freerank_R\mathcal{C}^q}{\sum_{q=0}^N\rank_R\mathcal{C}^q}
	\end{equation*}
	is called \emph{differential symmetric signature} of $R$, provided the limit exists.
\end{Def} 

\begin{Theorem}[\cite{BC17}]
The following facts hold.
\begin{enumerate}
	\item If $R$ is regular, then  $s_{\dd\sigma}(R)=1$.
	\item If $R=\kk[x,y]^G$, where $G\subseteq\Sl{2,\kk}$ is a finite small group such that $\chara\kk\nmid|G|$ then $ s_{\dd\sigma}(R)=\frac{1}{|G|}$.
	\item If $R=\kk[x,y,z]/(f)$ is the coordinate ring of a plane elliptic curve and $\chara\kk\neq2,3$, then $ s_{\dd\sigma}(R)=0$.
\end{enumerate}
\end{Theorem}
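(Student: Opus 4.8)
The overall plan is to compute, for each ring, the two sequences $\rank_R\mathcal{C}^q$ and $\freerank_R\mathcal{C}^q$ and then read off the limit from the partial sums in the definition of $s_{\dd\sigma}$. Part~(1) is immediate: if $R$ is regular then $\Omega_{R/\kk}$ is free of rank $d=\dim R$, hence $\Sym^q_R(\Omega_{R/\kk})$ is free, in particular reflexive, so $\mathcal{C}^q=\Sym^q_R(\Omega_{R/\kk})$ is free of rank $\binom{d+q-1}{q}$. For a free module the free rank equals the rank, so every partial quotient equals $1$ and therefore $s_{\dd\sigma}(R)=1$.

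For part~(2) I would transport everything to the representation theory of $G$. Put $S=\kk[x,y]$ and let $V$ be the representation spanned by $\dd x,\dd y$, so that $\Sym^q_S(\Omega_{S/\kk})=S\otimes_\kk\Sym^q V$. Because $G$ is small, the quotient map $\Spec S\to\Spec R$ is \'etale in codimension one; hence the reflexive hull commutes with taking invariants and $\mathcal{C}^q\cong(S\otimes_\kk\Sym^q V)^G$. Decomposing $\Sym^q V\cong\bigoplus_\rho V_\rho\otimes_\kk\Hom_G(V_\rho,\Sym^q V)$ into isotypical components and invoking the McKay correspondence of Auslander and Herzog, which identifies the indecomposable reflexive $R$-modules with the $M_\rho=(S\otimes_\kk V_\rho)^G$, yields $\mathcal{C}^q\cong\bigoplus_\rho M_\rho^{\oplus m_{\rho,q}}$ with $m_{\rho,q}=\dim_\kk\Hom_G(V_\rho,\Sym^q V)$. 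Since $R$ is not regular, the trivial representation $\rho_0$ is the only $\rho$ for which $M_\rho=R$ is free, so $\freerank_R\mathcal{C}^q=m_{\rho_0,q}=\dim_\kk(\Sym^q V)^G$. By Molien's formula $\sum_q\freerank_R\mathcal{C}^q\,t^q=\frac{1}{|G|}\sum_{g\in G}\det(1-tg)^{-1}$ has $\frac{1}{|G|}(1-t)^{-2}$ as its pole of maximal order at $t=1$ (the nontrivial $g$ contribute no factor $(1-t)$, as a small subgroup of $\Sl{2,\kk}$ has no nonidentity element fixing a nonzero vector), whereas $\sum_q\rank_R\mathcal{C}^q\,t^q=(1-t)^{-2}$. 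Comparing the leading terms of the two partial sums (both of order $N^2$) gives $s_{\dd\sigma}(R)=\frac{1}{|G|}$; this is the two-dimensional, $\Sl{2,\kk}$ instance of the argument that Section~\ref{section-invariants} extends to arbitrary $n$ and $\Gl{n,\kk}$.

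For part~(3) the ring $R$ is the two-dimensional homogeneous coordinate ring of a smooth plane cubic $E=\mathrm{Proj}(R)\subseteq\mathbb{P}^2$, so $\rank_R\mathcal{C}^q=q+1$ and $\sum_{q=0}^N\rank_R\mathcal{C}^q\sim N^2/2$. I would translate the free rank into a summand count for vector bundles on $E$. Sheafifying the graded module gives $\widetilde{\mathcal{C}^q}=\Sym^q\mathcal{E}$, where $\mathcal{E}=\widetilde{\Omega_R}$ is the rank-two bundle presented by $0\to\mathcal{O}_E(-3)\xrightarrow{\nabla f}\mathcal{O}_E(-1)^3\to\mathcal{E}\to0$; a degree count shows $\deg\mathcal{E}=0$, and the relative cotangent sequence of the $\mathbb{G}_m$-bundle $\Spec R\setminus\{0\}\to E$ exhibits $\mathcal{E}$ as a self-extension $0\to\mathcal{O}_E\to\mathcal{E}\to\mathcal{O}_E\to0$ whose class is $c_1(\mathcal{O}_E(1))\neq0$ in $\mathrm{Ext}^1_E(\mathcal{O}_E,\mathcal{O}_E)=H^1(E,\mathcal{O}_E)$, so $\mathcal{E}$ is Atiyah's indecomposable bundle $F_2$. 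Since sheafification is additive and $\widetilde{R}=\mathcal{O}_E$, while $\Sym^q\mathcal{E}$ has degree $0$ and is semistable of slope $0$ in characteristic zero, any free summand $R(a)$ of $\mathcal{C}^q$ forces $a=0$ and produces an $\mathcal{O}_E$-summand of $\Sym^q\mathcal{E}$. By Atiyah's classification $\Sym^q F_2\cong F_{q+1}$ is indecomposable for every $q\geq1$, and in general the multiplicity of $\mathcal{O}_E$ in $\Sym^q\mathcal{E}$ stays bounded (it is at most $h^0(E,\Sym^q\mathcal{E})$), so $\freerank_R\mathcal{C}^q$ is bounded. Hence $\sum_{q=0}^N\freerank_R\mathcal{C}^q=O(N)=o(N^2)$ and $s_{\dd\sigma}(R)=0$.

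The genuinely delicate point in (2) and (3) is the same: passing from an abstract, ungraded or sheaf-theoretic decomposition of $\mathcal{C}^q$ to an exact count of its free $R$-summands. In (2) this rests on the non-freeness of every nontrivial McKay module $M_\rho$ together with the interchange of reflexive hull and invariants, both of which use that $G$ is small. In (3) the crux is identifying $\mathcal{E}$ with the Atiyah bundle and controlling the $\mathcal{O}_E$-multiplicity of its symmetric powers, where the slope and semistability input and Atiyah's classification do the real work; checking that this multiplicity remains negligible also in small characteristic (where $\Sym^q$ interacts with Frobenius) is the subtlest part of the whole argument.
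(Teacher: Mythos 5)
First, note that the paper you were given does not prove this statement at all: it is quoted from the authors' earlier work \cite{BC17} as background. So the only in-paper material to compare against is Section \ref{section-invariants}, which generalizes part (2). Your parts (1) and (2) are correct and follow essentially the route of \cite{BC17} and of Theorem \ref{theorem:signatureofquotientsingularities}: for (2) you use that smallness makes the quotient map unramified in codimension one, so reflexive hulls commute with invariants (Proposition \ref{prop:invariantdifferentials} and Theorem \ref{theoremsymcommuteswithauslander} in the present paper), that the Auslander correspondence identifies $\freerank_R\mathcal{C}^q$ with the multiplicity of the trivial representation in $\Sym^q V$, and then Molien/Hilbert-series asymptotics give $1/|G|$. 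This is exactly the mechanism the paper runs in dimension $n$ (there phrased via cumulative Hilbert functions of $R$ and $S$ rather than via poles of the Molien series, but the two bookkeeping devices are equivalent).

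Part (3) is the right strategy (sheafify to the elliptic curve, identify $\widetilde{\Omega_R}$ with the Atiyah bundle $F_2$, use slope-$0$ semistability to force any free summand to be untwisted), but as written it has a genuine gap in positive characteristic. The identity $\Sym^q F_2\cong F_{q+1}$ is Atiyah's \emph{characteristic-zero} computation; in characteristic $p$ symmetric powers of $F_2$ need not be indecomposable. Your fallback bound, ``the multiplicity of $\mathcal{O}_E$ as a summand is at most $h^0(E,\Sym^q\mathcal{E})$, hence bounded,'' does not close this: $h^0(E,\Sym^q F_2)$ is not bounded in characteristic $p$. For instance, the standard inclusion $F^{e*}\mathcal{E}\hookrightarrow \Sym^{p^e}\mathcal{E}$ (via $v\mapsto v^{p^e}$) shows that for a supersingular curve, where $F^{*}F_2\cong\mathcal{O}_E^{2}$, one gets $h^0(\Sym^{p^e}F_2)\geq p^{e-1}+1$, so the trivial bound on the number of $\mathcal{O}_E$-summands grows linearly in $q$ along a subsequence; a linear-in-$q$ free rank would give $\sum_{q\le N}\freerank_R\mathcal{C}^q$ of order $N^2$ and a \emph{positive} limit, so this loophole must actually be excluded, not just acknowledged. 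One needs a finer argument — e.g.\ strong semistability of degree-$0$ bundles on elliptic curves together with Atiyah's classification of the semistable slope-$0$ bundles as $\bigoplus F_{r_i}\otimes L_i$, plus an analysis showing that trivial \emph{direct summands} (sections admitting a splitting cosection) do not proliferate — which is precisely the content supplied in \cite{BC17} and which your sketch flags but does not provide. In characteristic zero your argument for (3) is complete modulo routine details.
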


\begin{Rem}	We mention that there is an alternative version of the symmetric signature defined using the syzygy module $\Syz_R^d(\kk)$ instead of $\Omega_{R/\kk}$, and called \emph{syzygy symmetric signature}.
	Results \textit{(1)} and \textit{(2)} of the previous theorem are true also for this variant of the symmetric signature. 
	Moreover the second author and Katth\"an \cite{CK17} proved that the syzygy symmetric signature of two-dimensional cyclic quotient singularities is the reciprocal of the order of the acting group.
		However, we will not consider the syzygy symmetric signature in this paper.
\end{Rem}

\par We conclude this section with the following general lemma which gives a geometric interpretation of the reflexive hull.

\begin{Lemma}\label{lemma:reflexivehull}
Let $R$ be a normal Noetherian domain of dimension $d\geq2$ over an algebraically closed field $\kk$ and we denote by $X^{(1)}$ the subset of $X=\Spec R$ consisting of all prime ideals of height one.
Let $M$ be a torsion-free finitely generated $R$-module, then the reflexive hull of $M$ is
\begin{equation*}
M^{**}=\lim_{X^{(1)}\subseteq U}\Gamma(U,\widetilde{M}),
\end{equation*}
where the limit runs over all open subsets $U$ containing all prime ideals of height one. 
\end{Lemma}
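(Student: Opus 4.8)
The plan is to reduce the statement to the classical description of the reflexive hull as an intersection of localizations at height-one primes, and then to match that intersection with the colimit of sections. Two standard ingredients are needed. First, since $R$ is a normal Noetherian domain, for a finitely generated torsion-free module $M$ one has
\[
M^{**}=\bigcap_{\mathfrak{p}\in X^{(1)}} M_{\mathfrak{p}},
\]
the intersection being taken inside the finite-dimensional $\field{R}$-vector space $M\otimes_R\field{R}$; this holds because reflexive modules over a normal domain satisfy Serre's condition $S_2$ and hence coincide with the intersection of their localizations in codimension one, while at a height-one prime $R_{\mathfrak{p}}$ is a discrete valuation ring, so $M_{\mathfrak{p}}$ is free, already reflexive, and equal to $(M^{**})_{\mathfrak{p}}$. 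Second, for any open $U\subseteq X$ and any torsion-free $M$, torsion-freeness embeds all the localizations into $M\otimes_R\field{R}$, and the sheaf axiom applied to a finite affine cover $U=\bigcup_i D(f_i)$ identifies $\Gamma(U,\widetilde M)=\bigcap_i M_{f_i}=\bigcap_{\mathfrak{p}\in U} M_{\mathfrak{p}}$ inside $M\otimes_R\field{R}$.

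Next I would analyze the limit itself. The open sets $U$ containing $X^{(1)}$, ordered by reverse inclusion, form a directed system, since the intersection of two such open sets is again open and contains $X^{(1)}$; the transition maps are the restriction maps, which under the identification above are genuine inclusions $\bigcap_{\mathfrak{p}\in U}M_{\mathfrak{p}}\hookrightarrow\bigcap_{\mathfrak{p}\in U'}M_{\mathfrak{p}}$ for $U'\subseteq U$. Hence the colimit is simply the union
\[
\lim_{X^{(1)}\subseteq U}\Gamma(U,\widetilde M)=\bigcup_{X^{(1)}\subseteq U}\ \bigcap_{\mathfrak{p}\in U}M_{\mathfrak{p}}
\]
computed inside $M\otimes_R\field{R}$, and it remains to show that this union equals $M^{**}=\bigcap_{\mathfrak{p}\in X^{(1)}}M_{\mathfrak{p}}$.

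The inclusion $\subseteq$ is immediate, since every $U$ in the system contains $X^{(1)}$, whence $\bigcap_{\mathfrak{p}\in U}M_{\mathfrak{p}}\subseteq\bigcap_{\mathfrak{p}\in X^{(1)}}M_{\mathfrak{p}}=M^{**}$. For the reverse inclusion I would take $x\in M^{**}$ and consider its ideal of denominators $I=\{r\in R:\ rx\in M\}$. For a prime $\mathfrak{p}$ one has $x\in M_{\mathfrak{p}}$ exactly when $I\not\subseteq\mathfrak{p}$, i.e. when $\mathfrak{p}\in U:=X\setminus V(I)$, which is open. Because $x\in M_{\mathfrak{p}}$ for every height-one prime, $V(I)$ meets no point of $X^{(1)}$, so $X^{(1)}\subseteq U$; and by construction $x\in\bigcap_{\mathfrak{p}\in U}M_{\mathfrak{p}}=\Gamma(U,\widetilde M)$. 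Thus $x$ lies in the union, giving $M^{**}\subseteq\lim_{X^{(1)}\subseteq U}\Gamma(U,\widetilde M)$ and hence equality.

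I expect the main obstacle to be the correct interpretation and bookkeeping of the limit rather than any deep input: one has to recognize it as a filtered colimit of sections over shrinking neighborhoods of $X^{(1)}$ with injective restriction maps, and one must keep all the modules and sheaf sections realized inside the single ambient space $M\otimes_R\field{R}$ so that the various intersections and unions make literal sense. The only genuinely geometric step is the reverse inclusion, where finite generation of $M^{**}$ together with normality guarantees that the denominator locus $V(I)$ avoids all height-one primes and therefore produces an admissible open neighborhood $U$ of $X^{(1)}$.
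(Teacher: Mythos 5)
Your proof is correct, but it follows a genuinely different route from the paper's. You work entirely inside the vector space $M\otimes_R Q(R)$, invoke the classical divisorial description $M^{**}=\bigcap_{\mathfrak{p}\in X^{(1)}}M_{\mathfrak{p}}$ over a normal Noetherian domain, identify $\Gamma(U,\widetilde M)=\bigcap_{\mathfrak{p}\in U}M_{\mathfrak{p}}$ via the sheaf axiom, and then, for each $x\in M^{**}$, manufacture an explicit admissible open $U=X\setminus V(I)$ from the denominator ideal $I=\{r\in R:\, rx\in M\}$; the argument is element-wise and cohomology-free. The paper instead argues with local cohomology: for reflexive $M$ it uses $(S_2)$ and the fact that the complement of an admissible $U$ has codimension $\geq 2$ to get $\mathrm{grade}(I,M)\geq 2$, hence $H^0_I(M)=H^1_I(M)=0$ and $M\cong\Gamma(U,\widetilde M)$ for \emph{every} admissible $U$, and then reduces the torsion-free case to the reflexive one by noting that $M^{**}/M$ is supported in codimension $\geq 2$. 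The cohomological route shows slightly more (the directed system is eventually constant), while yours is more elementary at the price of outsourcing the depth input to the intersection formula for the reflexive hull, which is essentially equivalent in content to the paper's grade computation and should be cited explicitly. Two small points of bookkeeping: what makes $I\neq 0$ (so that $U$ is a nonempty open set) is simply that $x$ is a finite sum of elementary tensors and so admits a nonzero common denominator, not finite generation of $M^{**}$; and your claim that the restriction maps are injective, so that the colimit is a union, deserves the one-line justification that a section of a torsion-free sheaf vanishing at the generic point vanishes.
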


\begin{proof}
Let $U$ be a Zariski-open set of $X$ containing $X^{(1)}$.
Let $I$ be an ideal of $R$ such that $U=X\setminus\mathcal{V}(I)$, where $\mathcal{V}(I):=\{\mathfrak{p}\in X: \ \mathfrak{p}\supseteq I\}$.
Since $U$ contains $X^{(1)}$, every prime ideal of $\mathcal{V}(I)$ has height at least $2$.
\par First, assume that $M$ is reflexive. 
Then, $M$ satisfies $(S_2)$.
It follows that $\depth M_{\mathfrak{p}}\geq2$ for every prime ideal $\mathfrak{p}\in\mathcal{V}(I)$.
Therefore, we have that 
\begin{equation*}
\mathrm{grade}(I,M)=\inf\{\depth M_{\mathfrak{p}}: \ \mathfrak{p}\in\mathcal{V}(I)\}=2,
\end{equation*}
and consequently $H_I^0(M)=H_I^1(M)=0$.
Then, the short exact sequence
 \begin{equation*}
  0\rightarrow H^0_{I}(M)\rightarrow M\rightarrow \Gamma(U,\widetilde{M})\rightarrow H^{1}_{I}(M)\rightarrow 0,
 \end{equation*}
yields the isomorphism $M\cong \Gamma(U,\widetilde{M})$.
\par Now, let $M$ be torsion-free and consider the short exact sequence
\begin{equation*}
0\rightarrow M \rightarrow M^{**} \rightarrow T \rightarrow 0.
\end{equation*}
For any prime ideal $\mathfrak{p}$ of height $0$ or $1$, the local ring $R_{\mathfrak{p}}$ is regular of dimension $\leq1$, since $R$ is normal, hence regular in codimension $1$.
Therefore, the torsion-free module $M_{\mathfrak{p}}$ is reflexive, and $T_{\mathfrak{p}}=0$.
Since the limit $\lim_{X^{(1)}\subseteq U}\Gamma(U,\widetilde{M})$ is taken over all open subsets which contain all prime ideals of height $0$ and $1$, the claim is proved.
\end{proof}

%%%%%%%%%%%%%%%%%%% SECTION %%%%%%%%%%%%%%%%%%%%%%%%%
\section{Symmetric signature of invariant rings}\label{section-invariants}
\par Let $\kk$ be an algebraically closed field and let $ \fundspace=\kk^n$ be a $\kk$-vector space of dimension $n\geq2$.
We recall that an element $\sigma\in \Gl{F}$ of finite order is called a \emph{pseudo-reflection} provided the fixed subspace $\{v\in F:\ \sigma(v)=v\}$
has codimension one in $F$.
We say a subgroup of $\Gl{F}$ is \emph{small} if it contains no pseudo-reflections.
Let $G\subseteq\Gl{F}$ be a finite small group whose order is coprime with the characteristic of $\kk$.
The natural action of $G$ on $F$ extends naturally to the polynomial ring
$S=\kk[ \fundspace]=\kk[x_1,\dots,x_n]$.
More precisely, we fix a basis $\{e_1,\dots,e_n\}$ of $ \fundspace $, then we identify $e_i$ with the variable $x_i$ of $S$ and we define $\sigma(x_i)=\sigma(e_i)$ for every $\sigma\in G$.
We denote by $R=S^G$ the invariant ring of $S$ under this action.
$R$ is called \emph{quotient singularity}, and is a Noetherian Cohen-Macaulay graded normal domain of dimension $n$ (see e.g. \cite[Theorem 4.1]{BD08}).

\par Watanabe and Yoshida \cite{WY04} proved that in positive characteristic the F-signature of $R$ is equal to $\frac{1}{|G|}$, and the authors \cite{BC17} proved that the differential symmetric signature of $R$ is also $\frac{1}{|G|}$ if $n=2$ and $G\subseteq\Sl{F}$.
The goal of this section is to prove that $s_{\dd \sigma}(R)=\frac{1}{|G|}$ for any $n\geq2$ and any finite small subgroup $G\subseteq\Gl{ \fundspace }$ with $\chara\kk\nmid |G|$.

\subsection{Auslander correspondence}
\par We will need the Auslander correspondence between irreducible $\kk$-representations of $G$ and indecomposable $R$-direct summands of $S$.
We reviewed this correspondence in details in the paper \cite{BC17}, here we will limit ourselves to fix the notation and mention the results we need. 
In addition, the reader interested in the proof of the Auslander correspondence in the graded setting may look at the paper of Iyama and Takahashi \cite{IT13}.

%and in a more detailed exposition may consult the books of Leuschke and Wiegand \cite{LW12}, and of Yoshino \cite{Yos90}, or the second author's Ph.D. thesis \cite{Cam16}.

\par We consider the natural functor $\mathcal{F}$ from the category of $\kk$-representations of $G$ to the category $\proj{S*G}$ of finitely generated graded projective $S*G$-modules, 
where $S*G$ denotes the skew-group ring of $S$ and $G$.
For every $\kk$-representation $V$, we have $\mathcal{F}(V):=S\otimes_{\kk}V$.
The functor $\mathcal{F}$ has a right adjoint given by $\mathcal{F}'(P):=P\otimes_{S}\kk$.
The functors $\mathcal{F}$ and $\mathcal{F}'$ are an adjoint pair \cite[Lemma 10.1]{Yos90}, but in general they are not an equivalence of categories.

\par We define the functors $\mathcal{G}:\proj{S*G}\rightarrow\mathrm{Add}_R(S)$, $\mathcal{G}(M)=M^G$, and $\mathcal{G}':\mathrm{Add}_R(S)\rightarrow\proj{S*G}$, $\mathcal{G}'(N)=(S\otimes_RN)^{**}$, where $(-)^{*}:=\Hom_S(-,S)$, and $\mathrm{Add}_R(S)$ is the category whose objects are graded $R$-direct summands of $S$ and their direct sums.
Notice that the objects of $\mathrm{Add}_R(S)$ are $R$-reflexive.
The functors $\mathcal{G}$ and $\mathcal{G}'$ give an equivalence of categories \cite[Corollary 4.4]{IT13}
\begin{equation*}
\proj{S*G}\cong\mathrm{Add}_R(S).
\end{equation*}

\par The composition of the functors $\mathcal{F}$ and $\mathcal{G}$ is the \emph{Auslander functor} $\mathcal{A}(V):=(S\otimes_{\kk}V)^G$.
Its right adjoint is the functor
$\mathcal{A}'(N):=(S\otimes_RN)^{**}\otimes_{S}\kk$, where $N$ is an object in $\mathrm{Add}_R(S)$.
\begin{Theorem}[Auslander correspondence]\label{theorem:equivalenceofcategories}
	The functors $\mathcal{A}$ and $\mathcal{A}'$ have the following properties.
	\begin{compactenum}[1)]
		\item $\mathcal{A}(V)$ is an indecomposable $R$-module if and only if $V$ is an irreducible representation.
		\item $\mathcal{A}(\mathcal{A}'(M)) \cong M$ for every graded finitely generated $R$-module $M$ in $\mathrm{Add}_R(S)$.
		\item $\mathcal{A}'(\mathcal{A}(V)) \cong V$ for every $\kk$-representation $V$.
		\item $\rank_R\mathcal{A}(V)=\dim_{\kk}V$ for every $\kk$-representation $V$.
		\item If $V$ is a $\kk$-representation, and $a_0$ denotes the multiplicity of the trivial representation in $V$, then $a_0=\freerank_R\mathcal{A}(V)$.
	\end{compactenum}
\end{Theorem}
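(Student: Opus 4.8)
The plan is to read the Auslander functor as the composite $\mathcal{A}=\mathcal{G}\circ\mathcal{F}$ and its adjoint as $\mathcal{A}'=\mathcal{F}'\circ\mathcal{G}'$, and to deduce all five assertions from the behaviour of the two constituent adjoint pairs. Since the equivalence $\mathcal{G},\mathcal{G}'\colon\proj{S*G}\simeq\mathrm{Add}_R(S)$ is already at our disposal, the whole statement should reduce to understanding the pair $\mathcal{F},\mathcal{F}'$ relating $\kk G$-representations and $\proj{S*G}$; once $\mathcal{F}$ is recognised as an equivalence, the identities $\mathcal{G}'\mathcal{G}\cong\mathrm{id}$ and $\mathcal{G}\mathcal{G}'\cong\mathrm{id}$ will propagate through the composites.

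The key step I would isolate is that $\mathcal{F},\mathcal{F}'$ are mutually inverse equivalences between $\kk G$-representations and $\proj{S*G}$. A direct computation gives $\mathcal{F}'\mathcal{F}(V)=(S\otimes_{\kk}V)\otimes_S\kk\cong V$ as $\kk G$-representations, so the unit of the adjunction $\mathcal{F}\dashv\mathcal{F}'$ is an isomorphism and $\mathcal{F}$ is fully faithful. For essential surjectivity I would observe that the free module of rank one lies in the image, $S*G\cong\mathcal{F}(\kk G)$ for $\kk G$ the regular representation, so the counit $\mathcal{F}\mathcal{F}'(P)\to P$ is an isomorphism for $P$ free; splitting idempotents then extends this to every projective, using that full faithfulness identifies $\mathrm{End}_{S*G}(\mathcal{F}(V))$ with $\mathrm{End}_{\kk G}(V)$. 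I expect this essential-surjectivity step --- checking that every graded projective $S*G$-module is of the form $S\otimes_{\kk}V$, and that the idempotent-splitting is compatible with the grading --- to be the main point requiring care.

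With both pairs recognised as equivalences, 2) and 3) become formal: $\mathcal{A}'\mathcal{A}=\mathcal{F}'\mathcal{G}'\mathcal{G}\mathcal{F}\cong\mathcal{F}'\mathcal{F}\cong\mathrm{id}$ and $\mathcal{A}\mathcal{A}'=\mathcal{G}\mathcal{F}\mathcal{F}'\mathcal{G}'\cong\mathcal{G}\mathcal{G}'\cong\mathrm{id}$. For 1), I note that $\mathcal{A}=\mathcal{G}\circ\mathcal{F}$ is a composite of an equivalence and a fully faithful functor, hence preserves and reflects indecomposability through the induced isomorphism on endomorphism rings; since $\chara\kk\nmid|G|$ the group algebra $\kk G$ is semisimple, so for a representation $V$ indecomposability coincides with irreducibility, which yields the equivalence ``$\mathcal{A}(V)$ indecomposable $\iff$ $V$ irreducible''.

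It then remains to compute the two numerical invariants. For 4) I would pass to the generic point: the Reynolds operator $\tfrac{1}{|G|}\sum_{g\in G}g$ makes $(-)^G$ an exact $R$-linear retract, so it commutes with the flat base change $R\to\field{R}$, and using $S\otimes_R\field{R}\cong\field{S}$ one obtains $\mathcal{A}(V)\otimes_R\field{R}\cong(\field{S}\otimes_{\kk}V)^G$. Here $\field{S}\otimes_{\kk}V$ is a $\field{S}$-vector space of dimension $\dim_{\kk}V$ carrying a semilinear action of $G=\mathrm{Gal}(\field{S}/\field{R})$, so Galois descent gives $\dim_{\field{R}}(\field{S}\otimes_{\kk}V)^G=\dim_{\kk}V$, that is $\rank_R\mathcal{A}(V)=\dim_{\kk}V$. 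Finally, for 5) the trivial representation satisfies $\mathcal{A}(\kk)=S^G=R$, and by 3) any $V$ with $\mathcal{A}(V)\cong R=\mathcal{A}(\kk)$ satisfies $V\cong\mathcal{A}'\mathcal{A}(V)\cong\mathcal{A}'\mathcal{A}(\kk)\cong\kk$; thus among the indecomposable summands produced by 1) the free ones are exactly those coming from copies of the trivial representation. Writing $V\cong\bigoplus_iV_i^{\oplus m_i}$ into irreducibles and invoking the Krull--Schmidt property of $\mmodgr{R}$, the multiplicity $a_0$ of the trivial representation equals the number of free summands of $\mathcal{A}(V)$, i.e. $\freerank_R\mathcal{A}(V)=a_0$.
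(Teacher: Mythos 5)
The paper does not actually prove this theorem: it is imported from \cite{BC17} and \cite{IT13}, so your proposal has to stand on its own. Most of it does. The computation of the unit $(S\otimes_{\kk}V)\otimes_S\kk\cong V$, the resulting full faithfulness of $\mathcal{F}$ (hence of $\mathcal{A}$) on degree-zero homomorphisms, the Maschke plus Krull--Schmidt argument for 1) and 5), and the Galois-descent computation of the rank in 4) are all correct and standard.

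The genuine problem is your pivotal structural claim that $\mathcal{F}$ and $\mathcal{F}'$ are mutually inverse equivalences between $\kk$-representations of $G$ and $\proj{S*G}$. The paper explicitly asserts the opposite (``they are an adjoint pair, but in general they are not an equivalence of categories''), and it is right: in the graded category $\proj{S*G}$ the shifted free module $(S*G)(a)$ with $a\neq 0$ is projective but is not isomorphic to any $\mathcal{F}(V)=S\otimes_{\kk}V$ with $V$ concentrated in degree zero, since every $\mathcal{F}(V)$ is generated in degree zero; your idempotent-splitting argument only reaches degree-zero direct summands of $(S*G)^{\oplus m}$ without shifts, so essential surjectivity fails exactly at the step you flagged as ``the main point requiring care''. (In the local, ungraded setting the failure is instead that $\mathcal{F}$ is not full: $\Hom_{S*G}(S\otimes_{\kk}V,S\otimes_{\kk}W)\cong\left(S\otimes_{\kk}\Hom_{\kk}(V,W)\right)^G$ is strictly larger than $\Hom_{\kk G}(V,W)$.) Fortunately the over-claim exceeds what you actually use: 3) needs only $\mathcal{F}'\mathcal{F}\cong\mathrm{id}$ and $\mathcal{G}'\mathcal{G}\cong\mathrm{id}$, and 2) needs the counit $\mathcal{F}\mathcal{F}'(P)\rightarrow P$ to be an isomorphism only for $P=\mathcal{G}'(M)$ with $M\in\mathrm{Add}_R(S)$; such $P$ is a degree-zero direct summand of $\mathcal{G}'(S^{\oplus m})\cong(S*G)^{\oplus m}\cong\mathcal{F}(\kk G^{\oplus m})$, which is precisely the case your idempotent argument does cover. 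So the proof is repairable by replacing ``equivalence onto $\proj{S*G}$'' with ``equivalence onto the full subcategory of graded projectives generated in degree zero'', but as written the claimed equivalence is false and contradicts the paper's own setup.
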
 

\begin{Theorem}[\cite{BC17}, Theorem 2.12]\label{theoremsymcommuteswithauslander}
	The Auslander functor commutes with reflexive symmetric powers, that is for every $\kk$-representation $V$ of $G$, it holds 
	\begin{equation*}
	\Sym_R^{q}(\mathcal{A}(V))^{**}\cong\mathcal{A}(\Sym^{q}_{\kk}(V)). 
	\end{equation*}
\end{Theorem}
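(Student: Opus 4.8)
The plan is to prove that the Auslander functor $\mathcal{A}(V) = (S \otimes_\kk V)^G$ commutes with reflexive symmetric powers by reducing the statement to the well-understood behaviour of ordinary symmetric powers of the tautological module $S \otimes_\kk V$ over the skew-group ring, and then taking invariants and reflexive hulls. The key observation I would exploit is that both sides of the claimed isomorphism are $R$-reflexive modules, so by \Cref{lemma:reflexivehull} it suffices to produce an isomorphism over an open subset $U \subseteq X = \Spec R$ containing all height-one primes, i.e. away from a closed set of codimension $\geq 2$. Concretely, I would restrict attention to the locus where the $G$-action is free, whose complement has codimension $\geq 2$ precisely because $G$ is \emph{small} (contains no pseudo-reflections, so the ramification locus upstairs has codimension $\geq 2$, and the same holds downstairs on $\Spec R$).

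First I would record that $\mathcal{A}(V) = (S \otimes_\kk V)^G$ is a reflexive $R$-module lying in $\mathrm{Add}_R(S)$, and that $\Sym^q_\kk(V)$ is again a $\kk$-representation of $G$, so that $\mathcal{A}(\Sym^q_\kk(V))$ is defined and reflexive. Next I would analyze the natural comparison map. Over $S$, there is an evident isomorphism of symmetric powers
\begin{equation*}
\Sym^q_S(S \otimes_\kk V) \cong S \otimes_\kk \Sym^q_\kk(V),
\end{equation*}
which is moreover $G$-equivariant, where $G$ acts diagonally on the left and on both tensor factors on the right. Taking $G$-invariants of this $S*G$-module isomorphism and then applying the functor $\mathcal{G}(M) = M^G$ from the Auslander correspondence gives a canonical map
\begin{equation*}
\left(\Sym^q_R\left((S\otimes_\kk V)^G\right)\right) \longrightarrow \left(S \otimes_\kk \Sym^q_\kk(V)\right)^G = \mathcal{A}(\Sym^q_\kk V),
\end{equation*}
and after passing to reflexive hulls this becomes the map I wish to show is an isomorphism. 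The point is that symmetric powers do not commute with taking invariants in general, but they do commute up to a discrepancy supported on the non-free locus.

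The main step, which I expect to be the crux, is to check the isomorphism locally on the free locus. On the open set $U$ where $G$ acts freely, the quotient map $\Spec S \to \Spec R$ is étale, so $\Omega_{R/\kk}$ and the associated bundles are locally free there, and the formation of symmetric powers commutes with the flat base change and with taking invariants because étale descent identifies $G$-equivariant sheaves upstairs with sheaves downstairs. In this localized setting the reflexive hull is redundant (everything is already locally free), and the $S$-level isomorphism displayed above descends to the desired $R$-isomorphism over $U$. Since $U$ contains all height-one primes of $R$, \Cref{lemma:reflexivehull} upgrades this to an isomorphism of the reflexive hulls globally, which is exactly
\begin{equation*}
\Sym^q_R(\mathcal{A}(V))^{**} \cong \mathcal{A}(\Sym^q_\kk(V)).
\end{equation*}

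The delicate point will be verifying that the non-free locus genuinely has codimension at least two so that \Cref{lemma:reflexivehull} applies; this is where the \emph{smallness} hypothesis on $G$ is essential, as a pseudo-reflection would create a codimension-one ramification divisor and the argument would fail. I would also need to take care that the comparison map constructed at the $S*G$ level is natural and matches the map induced by the universal property of symmetric powers, so that the local isomorphism on $U$ is the restriction of the global map rather than merely an abstract isomorphism; establishing this compatibility, together with the reflexivity of both sides, is the content that makes the reduction via \Cref{lemma:reflexivehull} legitimate.
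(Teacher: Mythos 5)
Your proposal is correct and follows essentially the same strategy as the cited proof in [BC17] (and as the analogous argument for Proposition \ref{prop:invariantdifferentials} in this paper): both sides are reflexive, so by Lemma \ref{lemma:reflexivehull} one only needs the natural comparison map $\Sym^q_R(\mathcal{A}(V))\to\mathcal{A}(\Sym^q_\kk V)$ to be an isomorphism in codimension one, where smallness of $G$ (Theorem \ref{theorem:smallisunramified}) makes $R\hookrightarrow S$ \'etale and Galois descent plus compatibility of $\Sym$ with flat base change finishes the job. The points you flag as delicate (codimension $\geq 2$ of the non-free locus, and that the local isomorphism is the restriction of the global natural map) are exactly the right ones and are routine to verify.
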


\subsection{Invariants of K\"ahler differentials}
\par Let $\Omega_{S/\kk}$ be the module of K\"ahler differentials of $S$ over $\kk$.
Since $S$ is a polynomial ring, $\Omega_{S/\kk}$ is $S$-free and generated by the elements $\dd x_1,\dots, \dd x_n$.
The module $\Omega_{S/\kk}$ comes with a natural $G$-action, given by 

\begin{equation*}
f\dd h\mapsto \sigma(f)\dd \sigma(h),
\end{equation*}
for every $f,h\in S$ and $\sigma\in G$.
More explicitely, we have
\begin{equation*}
\sum_{i}f_i \dd x_i \mapsto \sum_i \sigma(f_i) \dd \sigma(x_i).
\end{equation*} 
With this action, $\Omega_{S/\kk}$ has a structure of  $S*G$-module, and we can consider its invariant submodule $\Omega_{S/\kk}^G$.
We will prove in Proposition \ref{prop:invariantdifferentials} that this is the module of Zariski differentials $\Omega_{R/\kk}^{**}$, where $(-)^{*}:=\Hom_R(-,R)$.

\begin{Rem}\label{rem:differentialisfundamental}
	The representation corresponding to the action of $G$ on $\Omega_{S/\kk}$ is the fundamental representation of $G$ on $ \fundspace $ which defines the invariant ring. The corresponding action on	
	 $\mathcal{F} (\fundspace) \cong S^n $ is just given by sending $e_i$ to $\sigma(e_i)$. But the natural action of $\sigma$ on the free $S$-module $\Omega_{S/\kk}$ with basis given by $\dd x_1,\dots,\dd x_n$ is given by $\dd x_i\mapsto \sigma(\dd x_i)=\dd \sigma(x_i)$, so it is isomorphic to the fundamental representation.
\end{Rem}

\par Let $\mathbb{A}^n_{\kk}=\Spec S$, and $X=\Spec R=\mathbb{A}^n_{\kk}/G$ be the quotient space.
Since $G$ is small, its action has no fix-points in codimension one.
In particular there exists a $G$-invariant open subset $U'\subseteq\mathbb{A}^n_{\kk}$ containing all points of codimension one and having no fixed points. Let $U\subseteq X$ be the corresponding quotient. 
Since $G$ is small, the projection map $\pi:U'\rightarrow U$ is unramified, thanks to the following result (cf. \cite[Theorem B.29]{LW12}).

\begin{Theorem}\label{theorem:smallisunramified}
Let $G \subseteq \Gl{\fundspace }$ be a finite group of linear automorphisms of
a finite-dimensional vector space $\fundspace $ over a field $\kk$. Set $S=\kk[\fundspace ]$ and $R= S^G$. Assume that $|G|$ is invertible in $\kk$. Then $R\hookrightarrow S$ is unramified in codimension one if and only if $G$ is small.
\end{Theorem}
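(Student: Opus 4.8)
The plan is to reduce the statement to a computation with inertia groups at height-one primes. Since $|G|$ is invertible in $\kk$, the extension $R\hookrightarrow S$ is module-finite and the induced extension of fraction fields $\field{R}\subseteq\field{S}$ is Galois with group $G$. Because $S$ is regular and $R$ is normal, for a height-one prime $\mathfrak{q}\subseteq S$ lying over $\mathfrak{p}=\mathfrak{q}\cap R$ the local ring $S_{\mathfrak{q}}$ is a discrete valuation ring, and the ramification is tame since $|G|\in\kk^{\times}$. First I would record the standard fact that in this tame Galois situation the ramification index of $\mathfrak{q}$ over $\mathfrak{p}$ equals the order of the inertia group
\begin{equation*}
I(\mathfrak{q})=\{\sigma\in G:\ \sigma(s)-s\in\mathfrak{q}\ \text{for all}\ s\in S\},
\end{equation*}
so that $R\hookrightarrow S$ is unramified at $\mathfrak{q}$ if and only if $I(\mathfrak{q})$ is trivial. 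Thus the theorem reduces to the equivalence: $G$ is small if and only if $I(\mathfrak{q})$ is trivial for every height-one prime $\mathfrak{q}$ of $S$.

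For the direction that a non-small group ramifies in codimension one, I would take a pseudo-reflection $\sigma\in G$. As $\mathrm{ord}(\sigma)$ divides $|G|$ and is therefore invertible in $\kk$, the element $\sigma$ is semisimple, hence diagonalizable; choosing coordinates adapted to its eigenspaces we may assume $\sigma(x_1)=\zeta x_1$ with $\zeta\neq1$ and $\sigma(x_i)=x_i$ for $i\geq2$. Then the height-one prime $\mathfrak{q}=(x_1)$ satisfies $\sigma(s)-s\in\mathfrak{q}$ for all $s\in S$, so $\sigma\in I(\mathfrak{q})$ is a nontrivial inertia element and $R\hookrightarrow S$ is ramified at $\mathfrak{q}$. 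Hence unramifiedness in codimension one forces $G$ to contain no pseudo-reflections.

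The converse is the crux and rests on a rank estimate. Suppose $\mathfrak{q}$ is a height-one prime admitting a nontrivial $\sigma\in I(\mathfrak{q})$. By definition each linear form $\sigma(x_i)-x_i=(\sigma-\mathrm{id})(x_i)$ lies in $\mathfrak{q}$; these forms span the image of the endomorphism $\sigma-\mathrm{id}$ on the space of linear forms, so the ideal they generate is contained in $\mathfrak{q}$ and is generated by $\dim\mathrm{im}(\sigma-\mathrm{id})$ independent linear forms, hence has height equal to that dimension. Since $\mathfrak{q}$ has height one, we obtain $\dim\mathrm{im}(\sigma-\mathrm{id})\leq1$, and $\sigma\neq\mathrm{id}$ forces equality; equivalently the fixed space of $\sigma$ has codimension one, i.e. $\sigma$ is a pseudo-reflection. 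Therefore if $G$ is small, every inertia group of a height-one prime is trivial and $R\hookrightarrow S$ is unramified in codimension one, which completes the equivalence.

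I would expect the main obstacle to be the careful justification of the ramification-theoretic input in the first paragraph, namely that tameness (guaranteed by $|G|\in\kk^{\times}$) makes the order of the inertia group coincide with the ramification index, so that \emph{unramified} is literally \emph{trivial inertia}. Once this dictionary is in place, the geometric content is entirely captured by the elementary rank computation of the third paragraph, which conveniently does not require $\mathfrak{q}$ to be homogeneous, since it only uses that the differences $\sigma(x_i)-x_i$ are honest linear forms contained in $\mathfrak{q}$.
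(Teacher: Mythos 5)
The paper does not prove this statement at all: it is quoted verbatim from Leuschke--Wiegand \cite[Theorem B.29]{LW12}, so there is no internal proof to compare against. Your argument is, in substance, the standard proof of that cited result, and all three of its steps are correct: the reduction of unramifiedness at a height-one prime $\mathfrak{q}$ to triviality of the inertia group $I(\mathfrak{q})$; the construction of the ramified prime $(x_1)$ from a diagonalized pseudo-reflection (diagonalization over $\kk$ is legitimate since $\mathrm{ord}(\sigma)$ is invertible in $\kk$ and the unique eigenvalue $\zeta\neq 1$ equals $\det\sigma\in\kk$); and, for the converse, the bound $\dim\mathrm{im}(\sigma-\mathrm{id})\leq\operatorname{ht}\mathfrak{q}=1$ coming from the fact that an ideal generated by $r$ independent linear forms has height $r$. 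The one input you correctly flag as needing justification, the dictionary ``unramified $\Leftrightarrow$ trivial inertia,'' requires two facts: that $|I(\mathfrak{q})|$ equals the ramification index (tameness, from $|G|\in\kk^{\times}$), and that the residue field extension is separable, which is automatic here because its degree divides $|G|$ and is therefore prime to $\chara\kk$. Two tiny points worth writing out in a final version: the membership $\sigma\in I((x_1))$ need only be checked on the variables, via the identity $\sigma(st)-st=\sigma(s)\bigl(\sigma(t)-t\bigr)+\bigl(\sigma(s)-s\bigr)t$; and the prime $(x_1)\cap R$ again has height one by going-down for the integral extension of normal domains $R\subseteq S$, so the ramification produced really occurs in codimension one.
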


\begin{Rem}\label{rem:specialopenset}
	The open set $U$ plays a special role, in fact by Lemma \ref{lemma:reflexivehull} we have that evaluation over $U$ corresponds to taking reflexive hulls.
	More precisely, if $M$ is a torsion-free $R$-module which is locally free on $U$, then
	\begin{equation*}
M^{**}=\Gamma(U,\widetilde{M}).		
	\end{equation*}
\end{Rem}

\begin{Rem}\label{rem:Gisprojection}
The functor $\mathcal{G}'$ is the algebraic counterpart of the projection map $\pi:U'\rightarrow U$ on the fix-point-free open sets.
In fact, if $N$ is a finitely generated $R$-module, and $\widetilde{N}$ is the corresponding sheaf on $U$, then the pullback $\pi^{*}\widetilde{N}$ on $U'$ is the sheaf corresponding to the $S$-module  $\mathcal{G}'(N)=(S\otimes_RN)^{**}$.
\end{Rem}

\par The following proposition can be seen as a generalization of a result of Martsinkovsky \cite{Mar90}, who proved it for two-dimensional quotient singularities.

\begin{Prop}\label{prop:invariantdifferentials}
	Let $\Omega_{R/\kk}^{**}$ be the module of Zariski differentials of $R$ over $\kk$. 
	Then we have  $\Omega_{S/\kk}^G\cong \Omega_{R/\kk}^{**}$.
\end{Prop}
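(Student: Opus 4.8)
The plan is to exhibit a natural comparison morphism $\phi\colon \Omega_{R/\kk}^{**}\to \Omega_{S/\kk}^G$ and to prove it is an isomorphism by checking it in codimension one, where the smallness of $G$ forces the projection $\pi$ to be \'etale. First I would construct the map: the inclusion $R\hookrightarrow S$ induces a morphism $\Omega_{R/\kk}\to \Omega_{S/\kk}$ of $R$-modules, and since every $b\in R$ satisfies $\sigma(\dd b)=\dd\sigma(b)=\dd b$ for all $\sigma\in G$, its image lands inside the invariant submodule $\Omega_{S/\kk}^G$. This yields $\Omega_{R/\kk}\to\Omega_{S/\kk}^G$, and passing to reflexive hulls gives $\phi\colon\Omega_{R/\kk}^{**}\to(\Omega_{S/\kk}^G)^{**}$.

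Next I would record that both sides are already reflexive. The module $\Omega_{S/\kk}\cong S^n$ is $S$-free, hence reflexive as an $R$-module because $S$ itself is a reflexive (indeed maximal Cohen-Macaulay) $R$-module. As $\chara\kk\nmid|G|$, the Reynolds operator $\tfrac{1}{|G|}\sum_{\sigma\in G}\sigma$ is an $R$-linear idempotent that splits off $\Omega_{S/\kk}^G$ as an $R$-direct summand of $\Omega_{S/\kk}$, so $\Omega_{S/\kk}^G$ is reflexive and $(\Omega_{S/\kk}^G)^{**}=\Omega_{S/\kk}^G$. Consequently, by Lemma \ref{lemma:reflexivehull} together with Remark \ref{rem:specialopenset}, it suffices to show that $\phi$ is an isomorphism after restriction to the fixed-point-free open set $U\subseteq X$, since a morphism of reflexive modules that is an isomorphism over an open set containing all height-one primes is itself an isomorphism.

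Then I would carry out the computation on $U$. By Theorem \ref{theorem:smallisunramified} the projection $\pi\colon U'\to U$ is unramified, and being finite flat with free $G$-action it is an \'etale Galois cover; hence the natural map $\pi^{*}\Omega_{U/\kk}\to\Omega_{U'/\kk}$ is an isomorphism. On $U'$ we have $\widetilde{\Omega_{S/\kk}}|_{U'}=\Omega_{U'/\kk}$, and on $U$ (which is smooth, being the free quotient of the smooth $U'$) we have $\widetilde{\Omega_{R/\kk}}|_{U}=\Omega_{U/\kk}$, locally free. Taking $G$-invariant global sections and using that $(\pi_{*}\mathcal{O}_{U'})^G=\mathcal{O}_U$ for the Galois cover $\pi$ (where again $|G|$ invertible is essential, via the Reynolds operator), I would identify $\Gamma(U',\Omega_{U'/\kk})^G$ with $\Gamma(U,\Omega_{U/\kk})$ and verify that this identification is precisely the one induced by $\phi$; compare also Remark \ref{rem:Gisprojection}, which identifies $\mathcal{G}'$ with the pullback $\pi^{*}$.

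The main obstacle I anticipate is this codimension-one analysis: one must argue cleanly that $\pi$ is not merely unramified but an \'etale Galois cover of smooth schemes, and that \'etale base change identifies the differentials upstairs with the pullback of the differentials downstairs, so that $G$-invariant Zariski differentials on $U'$ descend exactly to differentials on $U$. The delicate point is to check that the globally defined map $\phi$ \emph{realizes} this local isomorphism, rather than merely producing two abstractly isomorphic modules through an unrelated identification.
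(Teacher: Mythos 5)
Your proposal is correct and follows essentially the same route as the paper: construct the natural map $\Omega_{R/\kk}\to\Omega_{S/\kk}^G$, reduce to codimension one via the reflexive-hull Lemma \ref{lemma:reflexivehull}, use smallness (Theorem \ref{theorem:smallisunramified}) to make the extension unramified there, and conclude that the cotangent map is a surjection of free modules of equal rank $n$, hence an isomorphism. The only cosmetic difference is the final descent step, where the paper invokes the equivalence of categories given by $\mathcal{G}$ and $\mathcal{G}'$ while you perform the Galois/Reynolds-operator descent for the \'etale cover $\pi\colon U'\to U$ by hand; these amount to the same argument.
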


\begin{proof}
	There is a natural $R$-module homomorphism $\Omega_{R/\kk}\rightarrow\Omega_{S/\kk}^G$. Thanks to Lemma \ref{lemma:reflexivehull} and to Remark \ref{rem:specialopenset}, it is enough to check that it is an isomorphism over $U$, i.e. in codimension one.
	\par Let $\mathfrak{q}\in U'$ be a prime ideal of height $0$ or $1$, and let $\mathfrak{p}:=\mathfrak{q}\cap R\in U$ be its restriction to $R$. By localization properties of the module of differentials, we have $(\Omega_{S/\kk})_{\mathfrak{q}}\cong \Omega_{S_{\mathfrak{q}}/\kk}$ and $(\Omega_{R/\kk})_{\mathfrak{p}}\cong \Omega_{R_{\mathfrak{p}}/\kk}$.
	Since $G$ is small, by Theorem \ref{theorem:smallisunramified} we have that $R_{\mathfrak{p}}\hookrightarrow S_{\mathfrak{q}}$ is unramified, that is $\Omega_{S_{\mathfrak{q}}/R_{\mathfrak{p}}}=0$.
	For ease of notation we set $R=R_{\mathfrak{p}}$ and $S=S_{\mathfrak{q}}$.
	The relative cotangent sequence yields
	\begin{equation}\label{eq:conormalsequence}
	S\otimes_R\Omega_{R/\kk}\xrightarrow{\varphi}\Omega_{S/\kk}\rightarrow0,
	\end{equation}
	since $\Omega_{S/R}=0$.
	The rings $R$ and $S$ are regular, so $\Omega_{S/\kk}$ and $\Omega_{R/\kk}$ are free modules of rank $n$ over $S$ and $R$ respectively.
	Moreover $\Omega_{S/\kk}$ and $S\otimes_R\Omega_{R/\kk}$ are $S*G$-modules, and the map $\varphi$ is an $S*G$-module homomorphism. In fact, for every $\sigma\in G$, $s\in S$, and $r\in R$ we have
	\begin{equation*}
	\varphi(\sigma(s\otimes\dd r))=\varphi(\sigma(s)\otimes\dd\sigma(r))=\sigma(s)\dd r=\sigma(s\dd r)=\sigma(\varphi(s\otimes\dd r)).
	\end{equation*}
		The $S*G$-module $S\otimes_R\Omega_{R/\kk}$ is free of rank $n$.
	In particular, it is also reflexive, and we can rewrite \eqref{eq:conormalsequence} as
		\begin{equation*}
		(S\otimes_R\Omega_{R/\kk})^{**}\xrightarrow{\overline{\varphi}}\Omega_{S/\kk}\rightarrow0,
		\end{equation*}
	where $\overline{\varphi}$ is the composition of $\varphi$ with the inverse of the canonical isomorphism $S\otimes_R\Omega_{R/\kk}\rightarrow (S\otimes_R\Omega_{R/\kk})^{**}$. 	
	The homomorphism $\overline{\varphi}$ is a surjective map  between two free $S$-modules of the same rank $n$, hence it is also injective.
	Therefore $\overline{\varphi}$ is an isomorphism of $S*G$-modules.
	Since $\mathcal{G}$ and $\mathcal{G}'$ are an equivalence of categories, the isomorphism $(S\otimes_R\Omega_{R/\kk})^{**}\cong\Omega_{S/\kk}$ implies
	$\Omega_{S/\kk}^G\cong\Omega_{R/\kk}^{**}$ as required.
\end{proof}

\par In the complete local case in characteristic $0$, an analogous statement has been proved by Platte \cite{Pla80}.

\subsection{Differential symmetric signature of quotient singularities}

\begin{Theorem}\label{theorem:signatureofquotientsingularities}
	Let $F$ be a $\kk$-vector space of dimension $n\geq2$, and let $G\subseteq\Gl{ \fundspace }$ be a finite small group such that $\chara\kk\nmid|G|$. 
	Let $S=\kk[\fundspace ]$, and let $R=S^G$ be the invariant ring.
	Then the differential symmetric signature of $R$ is
	\begin{equation*}
 s_{\dd\sigma}(R)=\frac{1}{|G|}.
	\end{equation*}
\end{Theorem}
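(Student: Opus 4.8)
The plan is to run everything through the Auslander correspondence: I identify each reflexive symmetric power $\mathcal{C}^q$ with the image under the Auslander functor of a symmetric power of the fundamental representation, and then read off its rank and free rank from the representation-theoretic dictionary of Theorem \ref{theorem:equivalenceofcategories}. To start, Remark \ref{rem:differentialisfundamental} identifies the $S*G$-module $\Omega_{S/\kk}$ with $\mathcal{F}(F)=S\otimes_\kk F$, where $F$ carries the fundamental representation; taking $G$-invariants and invoking Proposition \ref{prop:invariantdifferentials} then gives
\[
\Omega_{R/\kk}^{**}\cong\Omega_{S/\kk}^G=\mathcal{A}(F).
\]
This is the only place where the smallness of $G$ is used, entering through Proposition \ref{prop:invariantdifferentials} and hence Theorem \ref{theorem:smallisunramified}.

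Next I would establish the key identification $\mathcal{C}^q\cong\mathcal{A}(\Sym^q_\kk F)$. Applying Theorem \ref{theoremsymcommuteswithauslander} to $V=F$ and using $\mathcal{A}(F)\cong\Omega_{R/\kk}^{**}$ gives $\mathcal{A}(\Sym^q_\kk F)\cong\bigl(\Sym^q_R\Omega_{R/\kk}^{**}\bigr)^{**}$. On the other hand $\mathcal{C}^q=\bigl(\Sym^q_R\Omega_{R/\kk}\bigr)^{**}$ is built from $\Omega_{R/\kk}$ itself. Since $R$ is regular in codimension one, $\Omega_{R/\kk}$ and $\Omega_{R/\kk}^{**}$ coincide on the open set $U$ (which contains all points of codimension $\leq 1$), so their $q$-th symmetric powers restrict to the same locally free sheaf on $U$; as both $\mathcal{C}^q$ and $\mathcal{A}(\Sym^q_\kk F)$ are reflexive, Remark \ref{rem:specialopenset} forces them to agree. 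This bridging step, passing from $\Omega_{R/\kk}$ to its reflexive hull so as to match the hypothesis of Theorem \ref{theoremsymcommuteswithauslander}, is the one I expect to need the most care.

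With the identification in hand, parts (4) and (5) of Theorem \ref{theorem:equivalenceofcategories} compute both quantities in the signature through the representation $\Sym^q_\kk F\cong S_q$, the degree-$q$ piece of $S$ with its $G$-action. Indeed $\rank_R\mathcal{C}^q=\dim_\kk\Sym^q_\kk F=\dim_\kk S_q$, while $\freerank_R\mathcal{C}^q$ equals the multiplicity of the trivial representation in $S_q$, that is $\dim_\kk(S_q)^G=\dim_\kk R_q$. Substituting into the definition reduces the whole computation to the Hilbert-function ratio
\[
s_{\dd\sigma}(R)=\lim_{N\to+\infty}\frac{\sum_{q=0}^N\dim_\kk R_q}{\sum_{q=0}^N\dim_\kk S_q}.
\]

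Finally I would evaluate this limit. As $G$ acts faithfully with $R=S^G$, the fraction fields satisfy $[\field{S}:\field{R}]=|G|$, so $S$ is a module-finite graded $R$-module of rank $|G|$; hence $\dim_\kk S_q$ and $|G|\cdot\dim_\kk R_q$ differ only by a term of lower order in $q$, giving $\dim_\kk R_q\sim\frac1{|G|}\dim_\kk S_q$. Both terms are eventually polynomial of degree $n-1$ in $q$, so the two partial sums are polynomials of degree $n$ in $N$ whose leading coefficients stand in the ratio $1/|G|$; letting $N\to+\infty$ yields $s_{\dd\sigma}(R)=1/|G|$. (Equivalently, termwise convergence of $\dim_\kk R_q/\dim_\kk S_q$ to $1/|G|$ together with $\sum_q\dim_\kk S_q=\infty$ forces the same value for the Cesàro-type ratio of partial sums.)
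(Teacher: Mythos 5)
Your reduction to Hilbert functions is exactly the paper's route: you identify $\Omega_{R/\kk}^{**}$ with $\mathcal{A}(\fundspace)$ via Proposition \ref{prop:invariantdifferentials} and Remark \ref{rem:differentialisfundamental}, pass through Theorem \ref{theoremsymcommuteswithauslander} and parts (4),(5) of Theorem \ref{theorem:equivalenceofcategories} to get $\freerank_R\mathcal{C}^q=\dim_\kk R_q$ and $\rank_R\mathcal{C}^q=\dim_\kk S_q$, and you are in fact more explicit than the paper about why $(\Sym^q_R\Omega_{R/\kk})^{**}\cong(\Sym^q_R\Omega_{R/\kk}^{**})^{**}$ (both agree on $U$ and are reflexive). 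Up to that point the argument is sound.

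The final step, however, contains a genuine error. You claim that $\dim_\kk S_q$ and $|G|\cdot\dim_\kk R_q$ differ only by lower-order terms, that $\dim_\kk R_q$ is eventually polynomial of degree $n-1$, and (in the parenthetical) that $\dim_\kk R_q/\dim_\kk S_q\to 1/|G|$ termwise. All three statements are false in general: $R$ carries the grading induced from $S$ and need not be generated in degree one, so its Hilbert function is only a quasi-polynomial whose leading coefficient can oscillate. A counterexample lies within the scope of the theorem: for $G=\{\pm I\}\subseteq\Gl{2,\kk}$ (which is small) one has $R=\kk[x^2,xy,y^2]$, so $\dim_\kk R_q=q+1$ for $q$ even and $\dim_\kk R_q=0$ for $q$ odd; the termwise ratio oscillates between $1$ and $0$ and never approaches $1/2$. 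The correct way to finish --- and what the paper does --- is to work with the cumulative sums: choose homogeneous $s_1,\dots,s_m\in S$ (with $m=|G|$) of degrees $h_1,\dots,h_m$ such that $\bigoplus_{i=1}^m Rs_i\subseteq S$ has torsion cokernel, deduce $\sum_{q=0}^N\dim_\kk S_q=\sum_{i=1}^m\sum_{q=0}^N\dim_\kk R_{q-h_i}+O(N^{n-1})$, and invoke the fact (from \cite{DS90}) that the cumulative Hilbert function of $R$ is a quasi-polynomial of degree $n$ with \emph{constant} leading coefficient $c$; each shifted sum then contributes $cN^n+O(N^{n-1})$, giving $c'=mc$ and the value $1/|G|$. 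Your rank-$|G|$ idea is the right one, but it must be applied to the partial sums, where the leading coefficient is constant, not termwise.
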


\begin{proof}
	Let $M:=\Omega_{R/\kk}^{**}$ be the module of Zariski differentials of $R$ over $\kk$. For every natural number $q$, we fix the notation for the numbers involved in the limit defining the differential symmetric signature:
	\begin{equation*}
	a_q:=\freerank_R\Sym_R^q(M)^{**}\ \text{ and } \  
	b_q:=\rank_R\Sym_R^q(M)^{**}.
		\end{equation*}
	By Proposition \ref{prop:invariantdifferentials} and Remark \ref{rem:differentialisfundamental} we have that $M$ corresponds to the fundamental representation $\fundspace$ via Auslander correspondence.
	Therefore thanks to Theorem \ref{theoremsymcommuteswithauslander} we can interpret  the numbers $a_q$ and $b_q$ as 
	\begin{equation*}
	a_q=\dim_{\kk}(\Sym_{\kk}^q(\fundspace))^G=\dim_{\kk}(R_q) \ \text{ and } \
	b_q=\dim_{\kk}(\Sym_{\kk}^q(\fundspace))=\dim_{\kk}(S_q),
		\end{equation*}
	where $R_q$ (resp. $S_q$) denotes the $q$-th degree part of $R$ (resp. $S$).
	In other words, $a_q$ and $b_q$ are the values of the Hilbert functions of $R$ and $S$ respectively.
	In particular, the limit defining the differential symmetric signature involves the cumulative Hilbert functions $\sum_{q=0}^Na_q$ and $\sum_{q=0}^N b_q$ of $R$ and $S$.
	\par The cumulative Hilbert functions of $R$ and $S$ are given by quasi-polynomials of the same degree $n$ and constant leading terms $c$ and $c'$ (see e.g. \cite[Theorem 2.7 (2)]{DS90}), that is
	\begin{equation}\label{eq:iteratedHfunctions}
\sum_{q=0}^N a_q = cN^n+O(N^{n-1}) \ \text{ and } \
\sum_{q=0}^N b_q = c'N^n+O(N^{n-1}).
	\end{equation}
	It follows that the differential symmetric signature exists, and it is equal to $c/c'$.
	\par Since $Q(R) = Q(S)^G$ and $Q(R) \subseteq Q(S)$ is a Galois extension of degree $m=|G|$, there exist homogeneous elements $s_1,\dots, s_m$  of $S$ of degrees $h_1,\dots,h_m$ such that 
	\begin{equation*}
	\bigoplus_{i=1}^m Rs_i \subseteq S 
		\end{equation*}
		and the quotient is torsion.
	Then, we can write the $q$-th value $b_q$ of the Hilbert function of $S$ as
		$b_q= \sum_{i=1}^m a_{q-h_i}$ up to the torsion part.
	Summing up for $q=0$ to $N$, we obtain an equality for the cumulative Hilbert functions
	\begin{equation*}
	\sum_{q=0}^N b_q= \sum_{q=0}^N \sum_{i=1}^m a_{q-h_i} + O(N^{n-1}) .
	\end{equation*}
	We swap the two sums on the right and we get
	\begin{equation*}
	\sum_{q=0}^N b_q= \sum_{i=1}^m \sum_{q=0}^N a_{q-h_i} = \sum_{i=1}^m c(N-h_i)^n+O(N^{n-1}).
	\end{equation*}
	On the other hand by \eqref{eq:iteratedHfunctions}, we have  $\sum_{q=0}^N b_q = c'N^n+O(N^{n-1})$.  It follows that $c'=mc$. Therefore, the differential symmetric signature of $R$ is $c/c'=1/m$ as required.
\end{proof}

\begin{Rem}
The proof of Theorem \ref{theorem:signatureofquotientsingularities} shows that the function $N\mapsto \sum_{q=0}^N\freerank_R\Sym_R^q(\Omega_{R/\kk})^{**}$ is quasi-polynomial. 
A similar behaviour for the F-signature function of quotient singularities is observed by the second author and De Stefani in \cite{CDS17}.
\end{Rem}

%%%%%%%%%%%%%%%%%%% SECTION %%%%%%%%%%%%%%%%%%%%%%%%%
\section{Symmetric signature of hypersurfaces with isolated singularities}\label{section-hypersurfaces}
\par Let $\kk$ be an algebraically closed field, let $n\geq4$, and let $f\in\kk[x_1,\dots,x_n]$ be a homogeneous polynomial of degree $\geq2$ such that $\chara\kk\nmid\deg f$.
We consider the $(n-1)$-dimensional quotient ring $R=\kk[x_1,\dots,x_n]/(f)$, and assume in addition that $R$ is a domain with an isolated singularity.
The ring $R$ is the coordinate ring of a hypersurface in the $n$-dimensional affine space $\mathbb{A}_{\kk}^n$.
The goal of this section is to prove that $s_{\dd\sigma}(R)=0$. 
We will split the proof into several lemmas.
\par The arguments we employ here have been suggested by Bruns.

\begin{Prop}\label{prop:symisreflexive}
	Let $\kk$ be an algebraically closed field, let $n$ and $s$ be positive integers such that $n\geq2s+2$, and let $f_1,\dots,f_s\in\kk[x_1,\dots,x_n]$ be homogeneous polynomials of degrees $\geq2$ such that the ring $R=\kk[x_1,\dots,x_n]/(f_1,\dots,f_s)$ is a complete intersection domain with an isolated singularity. 
	Then the symmetric algebra 
	\begin{equation*}
	\mathcal{S}:=\displaystyle\bigoplus_{q\in\mathbb{N}}\Sym_R^q(\Omega_{R/\kk}).
	\end{equation*}
	is a complete intersection and $R$-reflexive.
\end{Prop}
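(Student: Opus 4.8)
The plan is to make the symmetric algebra explicit as a quotient of a polynomial ring and then extract both assertions from a single dimension count. First I would present the module of differentials: since $R=\kk[x_1,\dots,x_n]/(f_1,\dots,f_s)$ is a complete intersection, the conormal module $I/I^2$ (with $I=(f_1,\dots,f_s)$) is $R$-free of rank $s$, and the right-exact conormal sequence reads
\[
R^s \xrightarrow{\ J\ } R^n \longrightarrow \Omega_{R/\kk} \longrightarrow 0,
\]
where $J=(\partial f_i/\partial x_j)$ is the Jacobian matrix. Applying $\Sym_R(-)$ turns a cokernel presentation into a quotient of a polynomial extension, so
\[
\mathcal{S}\cong \kk[x_1,\dots,x_n,T_1,\dots,T_n]/(f_1,\dots,f_s,g_1,\dots,g_s),
\qquad g_i=\sum_{j=1}^n \frac{\partial f_i}{\partial x_j}\,T_j .
\]
Thus $\mathcal{S}$ is cut out by $2s$ homogeneous elements inside the $2n$-dimensional polynomial ring $P=\kk[x_1,\dots,x_n,T_1,\dots,T_n]$.

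To obtain the complete intersection property I would bound $\dim\mathcal{S}$ from above by $2(n-s)$ by projecting $\Spec\mathcal{S}\to\Spec R$. Over the regular locus $\Spec R\setminus\{\mathfrak{m}\}$, which is nonempty and of dimension $n-s$ because the singularity is isolated, the module $\Omega_{R/\kk}$ is locally free of rank $n-s$, so this part of $\Spec\mathcal{S}$ is a rank $(n-s)$ bundle, of dimension $2(n-s)$. The fiber over the vertex is $\Spec(\mathcal{S}/\mathfrak{m}\mathcal{S})$; since every $\partial f_i/\partial x_j$ has positive degree and hence vanishes at the origin, $\mathcal{S}/\mathfrak{m}\mathcal{S}\cong \Sym_\kk(\kk^n)=\kk[T_1,\dots,T_n]$ has dimension $n$. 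Therefore $\dim\mathcal{S}=\max\{2(n-s),n\}=2(n-s)$, where the last equality uses $n\ge 2s$. A quotient of the Cohen-Macaulay ring $P$ by $2s$ elements whose quotient has dimension exactly $2n-2s$ is a complete intersection; hence $f_1,\dots,f_s,g_1,\dots,g_s$ is a regular sequence, $\mathcal{S}$ is a complete intersection, and in particular it is Cohen-Macaulay and, by unmixedness, equidimensional.

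It remains to derive $R$-reflexivity of each graded piece $\Sym_R^q(\Omega_{R/\kk})$ from a depth estimate, and this is where the sharper hypothesis $n\ge 2s+2$ enters. Since $\mathcal{S}$ is Cohen-Macaulay grade equals height, and equidimensionality gives
\[
\depth_{\mathfrak{m}\mathcal{S}}\mathcal{S}=\height(\mathfrak{m}\mathcal{S})=\dim\mathcal{S}-\dim(\mathcal{S}/\mathfrak{m}\mathcal{S})=2(n-s)-n=n-2s\ge 2 .
\]
Because $\mathfrak{m}\mathcal{S}$ is generated by elements of $R$ and $\mathcal{S}=\bigoplus_q\Sym_R^q(\Omega_{R/\kk})$ as an $R$-module, the \v Cech complex computing local cohomology splits along the symmetric grading, so that $H^i_{\mathfrak{m}\mathcal{S}}(\mathcal{S})=\bigoplus_q H^i_{\mathfrak{m}}(\Sym_R^q(\Omega_{R/\kk}))$. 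The depth bound then forces $H^0_{\mathfrak{m}}(\Sym_R^q(\Omega_{R/\kk}))=H^1_{\mathfrak{m}}(\Sym_R^q(\Omega_{R/\kk}))=0$ for every $q$. As each $\Sym_R^q(\Omega_{R/\kk})$ is locally free on the punctured spectrum $\Spec R\setminus\{\mathfrak{m}\}$, the vanishing of $H^0_{\mathfrak{m}}$ and $H^1_{\mathfrak{m}}$ forces torsion-freeness together with Serre's condition $(S_2)$, which over the normal domain $R$ is equivalent to reflexivity. Hence every $\Sym_R^q(\Omega_{R/\kk})$ is reflexive, i.e.\ $\mathcal{S}$ is $R$-reflexive.

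I expect the main obstacle to be the transition from the global Cohen-Macaulayness of the total space $\mathcal{S}$ to the module-theoretic reflexivity of its graded pieces over $R$: one must verify carefully that the grade-height identity and the splitting of local cohomology are legitimate in the graded setting, and that $\mathcal{S}$ is genuinely equidimensional. The numerology is the delicate point, since the complete intersection property already holds for $n\ge 2s$, whereas the additional $+2$ is precisely the depth needed to upgrade $(S_1)$ to the $(S_2)$ condition that yields reflexivity.
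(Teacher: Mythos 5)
Your proof is correct and follows essentially the same route as the paper's: the same presentation of $\mathcal{S}$ as a quotient of $\kk[x_1,\dots,x_n,T_1,\dots,T_n]$ by $2s$ homogeneous elements, and the same dimension count $\dim\mathcal{S}=\max\{2(n-s),n\}=2(n-s)$ obtained by comparing the vector bundle over the punctured spectrum with the fiber $\kk[T_1,\dots,T_n]$ over the vertex. The only (cosmetic) divergence is in the last step: the paper deduces reflexivity from the $(S_2)$ property of $\mathcal{S}$ and the codimension bound $n-2s\geq2$ on the exceptional fiber via section extension (Lemma \ref{lemma:reflexivehull}), whereas you encode the same bound as $\mathrm{grade}(\mathfrak{m}\mathcal{S},\mathcal{S})\geq2$ and split the local cohomology along the symmetric grading, which has the mild advantage of yielding reflexivity of each $\Sym_R^q(\Omega_{R/\kk})$ directly rather than by passing to direct summands as in Corollary \ref{cor:symqisreflexive}.
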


\begin{proof}
    First, we prove that $\mathcal{S}$ is a complete intersection.
	We consider the corresponding sheaf of algebras over $X=\Spec R$, and its spectrum $T_{X/\kk}:=\Spec\widetilde{\mathcal{S}}$, i.e. the tangent scheme of $X$ in the sense of \cite[Section 16.5]{EGAIV}.
	Since $R$ is an isolated singularity, the tangent scheme $T_{X/\kk}$ has dimension $2d$ over the punctured spectrum of $R$, where $d=\dim R$.
	On the other hand the fiber over  $\mathfrak{m}_R=(x_1,\dots,x_n)  \in \Spec R $ is given by
	\begin{equation}\label{eq-fiberorigin}
	\displaystyle\bigoplus_{q\in\mathbb{N}}\Sym_R^q(\Omega_{R/\kk})\otimes_R\kk\cong\displaystyle\bigoplus_{q\in\mathbb{N}}\Sym_R^q(\Omega_{R/\kk}\otimes_R\kk).
	\end{equation}
	By \cite[Proposition 8.7]{Har77}, we have an isomorphism of $\kk$-vector spaces 
	$\Omega_{R/\kk}\otimes_R\kk \cong\mathfrak{m}_R/\mathfrak{m}_R^2$.  
	It follows that $\Omega_{R/\kk}\otimes_R\kk$ has $\kk$-dimension $n$, and therefore the fiber ring \eqref{eq-fiberorigin} has Krull dimension $n$, so $\dim T_{X/\kk}=\max\{2d,n\}$.
	Since $R$ is a complete intersection, we have $n=d+s$, therefore $\dim\mathcal{S}=\dim T_{X/\kk}=2d$, because $n\geq 2s$ implies $2d\geq n$.
	Finally, we have a natural identification $\mathcal{S}\cong\kk[x_1\dots,x_n,\dd x_1,\dots,\dd x_n]/(f_1,\dots,f_s,\dd f_1,\dots,\dd f_s)$, which shows that $\mathcal{S}$ is a complete intersection of dimension $2d=2n-2s$ in a polynomial ring with $2n$ variables.
	\par We prove that $\mathcal{S}$ is $R$-reflexive.
	Since $\mathcal{S}$ is a complete intersection, it is a Cohen-Macaulay ring, so it satisfies condition $(S_2)$.
	Moreover, the singular locus of $T_{X/\kk}$ is given by the fiber over the origin, so it has dimension $n$ and codimension $2d-n=n-2s\geq2$.
	In particular, $T_{X/\kk}$ is regular in codimension $1$.
	So, if $U$ is the punctured spectrum of $R$, and $\pi:T_{X/\kk}\rightarrow X$ the canonical projection, then $\pi^{-1}(U)\subseteq T_{X/\kk}$ is regular.
	Thus we have
	\begin{equation*}
	\Gamma(U,\widetilde{\mathcal{S}})\cong\Gamma(\pi^{-1}(U),\mathcal{O}_{T_{X/\kk}})=\mathcal{S},
	\end{equation*}
	therefore $\mathcal{S}$ is $R$-reflexive by Lemma \ref{lemma:reflexivehull}.
\end{proof}

\par Since direct summands of reflexive modules are reflexive, we obtain the following.

\begin{Cor}\label{cor:symqisreflexive}
Let $\kk$ and $R$ be as in Proposition \ref{prop:symisreflexive}. Then for every natural number $q$ the $R$-module $\Sym^q_R(\Omega_{R/\kk})$ is reflexive.	
\end{Cor}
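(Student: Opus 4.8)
The plan is to reduce the corollary to Proposition~\ref{prop:symisreflexive} by applying the hypersurface case $s=1$ and then extracting the individual graded pieces. First I would verify that the ring $R=\kk[x_1,\dots,x_n]/(f)$ of Section~\ref{section-hypersurfaces}, with $n\geq4$ and $f$ homogeneous of degree $\geq2$ defining a domain with an isolated singularity, is precisely an instance of the hypothesis of Proposition~\ref{prop:symisreflexive} with $s=1$: the condition $n\geq 2s+2$ becomes $n\geq4$, which is exactly what is assumed. Hence the proposition applies and tells us that the full symmetric algebra $\mathcal{S}=\bigoplus_{q\in\mathbb{N}}\Sym_R^q(\Omega_{R/\kk})$ is $R$-reflexive.

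The key step is then to pass from reflexivity of the whole graded algebra $\mathcal{S}$ to reflexivity of each graded component $\Sym_R^q(\Omega_{R/\kk})$. Since $\mathcal{S}=\bigoplus_{q\in\mathbb{N}}\Sym_R^q(\Omega_{R/\kk})$ is a direct sum of the $R$-modules $\Sym_R^q(\Omega_{R/\kk})$, each summand is in particular a direct summand (as an $R$-module) of the reflexive module $\mathcal{S}$. I would then invoke the general fact, already flagged in the sentence preceding the corollary, that a direct summand of a reflexive module is reflexive. This fact follows because the double-dual functor $(-)^{**}=\Hom_R(\Hom_R(-,R),R)$ is additive and commutes with finite direct sums, so the canonical evaluation map $M\to M^{**}$ being an isomorphism for $M=\mathcal{S}$ forces it to be an isomorphism on each summand. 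Applying this to the summand $\Sym_R^q(\Omega_{R/\kk})$ gives exactly the reflexivity claimed for each fixed $q$.

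I expect the main obstacle to be a bookkeeping subtlety rather than a deep difficulty: the reflexivity established in Proposition~\ref{prop:symisreflexive} is for $\mathcal{S}$ as a module over $R$ (equivalently as a sheaf over $X=\Spec R$), and one must be careful that the grading decomposition $\mathcal{S}=\bigoplus_q \Sym_R^q(\Omega_{R/\kk})$ is compatible with the $R$-module structure used there, so that each $\Sym_R^q(\Omega_{R/\kk})$ genuinely sits as an $R$-direct summand. Once this compatibility is confirmed, the summand argument is immediate, and the proof is a short deduction. Thus the corollary reduces to the single observation that reflexivity is inherited by direct summands, combined with the grading of the symmetric algebra.
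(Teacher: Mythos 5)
Your proposal is correct and is exactly the paper's argument: the corollary is deduced from Proposition~\ref{prop:symisreflexive} by the single observation that a direct summand of a reflexive module is reflexive, applied to each graded piece of $\mathcal{S}$. The only cosmetic point is that the corollary is stated for the general complete intersection $R$ of the proposition (arbitrary $s$ with $n\geq 2s+2$), not only the hypersurface case $s=1$, but your summand argument applies verbatim in that generality.
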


\par We will use the following easy observation repeatedly.

\begin{Rem}\label{remark-unit} Let $R$ be a positively graded ring with homogeneous maximal ideal $\mathfrak{m}_R$ and let $I=(a_1,\dots,a_n)$ be an ideal of $I$. 
If $1\in I$ then at least one of $a_1,\dots,a_n$ is a unit. 
In fact, if none of  $a_i$'s is a unit, then $I\subseteq \mathfrak{m}_R$ by degree reasons. 
\end{Rem}

\begin{Lemma}\label{lemma:omegasummand}
 Let $P=\kk[x_1,\dots,x_n]$ be a polynomial ring over an algebraically closed field $k$. Consider $f_1,\dots,f_s\in P$ homogeneous elements of degrees $\geq2$ and the corresponding quotient ring $R=\kk[x_1,\dots,x_n]/(f_1,\dots,f_s)$.
 If $\freerank_R\Sym^q_R(\Omega_{R/\kk})>0$ for some $q\geq1$, then also $\freerank_R\Omega_{R/\kk}>0$.
\end{Lemma}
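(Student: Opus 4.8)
The plan is to exploit the graded algebra structure of $\mathcal{S}=\bigoplus_q\Sym^q_R(\Omega_{R/\kk})$ together with the elementary Remark \ref{remark-unit}, reducing a free summand in symmetric degree $q$ to one in degree $1$ by ``dividing off'' a single differential.

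First I would translate the hypothesis $\freerank_R\Sym^q_R(\Omega_{R/\kk})>0$ into a concrete splitting datum. A nonzero graded free summand yields a homogeneous degree-$0$ surjection onto some free module of positive rank, and projecting onto one copy $R(-a)$ and identifying $R(-a)\cong R$ produces a homogeneous $R$-linear map $\psi:\Sym^q_R(\Omega_{R/\kk})\to R$ together with a homogeneous element $\omega$ satisfying $\psi(\omega)=1$.

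Next I would use that $\Sym^q_R(\Omega_{R/\kk})$ is generated as an $R$-module by the monomials $\dd x_{i_1}\cdots\dd x_{i_q}$. Writing $\omega$ as an $R$-linear combination of these monomials and applying $\psi$ shows that the homogeneous elements $\psi(\dd x_{i_1}\cdots\dd x_{i_q})\in R$ generate the unit ideal. This is the one place where gradedness is essential and where I would invoke Remark \ref{remark-unit}: since $R$ is positively graded with $R_0=\kk$, the homogeneous units are exactly the nonzero scalars, so the unit ideal being generated by these homogeneous images forces one of them, say $\psi(\dd x_{i_1}\cdots\dd x_{i_q})$, to be a unit.

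Finally I would peel off a factor. Writing the distinguished monomial as $\dd x_{i_1}\cdot\theta$ with $\theta=\dd x_{i_2}\cdots\dd x_{i_q}\in\Sym^{q-1}_R(\Omega_{R/\kk})$ (and $\theta=1$ when $q=1$, where the statement is immediate), and using the multiplication map $\Omega_{R/\kk}\otimes_R\Sym^{q-1}_R(\Omega_{R/\kk})\to\Sym^q_R(\Omega_{R/\kk})$ of the symmetric algebra, I would define the homogeneous $R$-linear map $\lambda:\Omega_{R/\kk}\to R$ by $\lambda(\eta)=\psi(\eta\cdot\theta)$. Then $\lambda(\dd x_{i_1})$ is a unit, so $\dd x_{i_1}$ generates a free $R$-direct summand of $\Omega_{R/\kk}$ and $\freerank_R\Omega_{R/\kk}>0$. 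The proof is essentially formal; the only point requiring care is the degree bookkeeping ensuring that $\lambda$ is genuinely homogeneous and $R$-linear, and that the unit indeed appears among the images of the monomial generators, which is precisely what Remark \ref{remark-unit} supplies.
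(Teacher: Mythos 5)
Your argument is correct, and its first half (extract a homogeneous map $\psi\colon\Sym^q_R(\Omega_{R/\kk})\to R$ hitting $1$, expand over the monomial generators $(\dd x_{i_1})\cdots(\dd x_{i_q})$, and use Remark \ref{remark-unit} to force one monomial image to be a unit) coincides with the paper's. Where you genuinely diverge is the final step. The paper takes the coefficients $a_{\mu+e_j}=\psi\bigl((\dd X)^{\mu}\,\dd x_j\bigr)$, defines the map $g\mapsto \sum_j \frac{\partial g}{\partial x_j}a_{\mu+e_j}$ on the polynomial ring $P$, and then has to verify, by applying $\psi$ to the relations $(\dd X)^{\mu}\bigl(\sum_j \frac{\partial f_i}{\partial x_j}\dd x_j\bigr)=0$, that this rule sends each $f_i$ into $(f_1,\dots,f_s)$ and hence descends to a $\kk$-derivation of $R$, i.e.\ to an element of $\Hom_R(\Omega_{R/\kk},R)$ with $1$ in its image. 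You instead observe that multiplication by the fixed element $\theta=(\dd X)^{\mu}\in\Sym^{q-1}_R(\Omega_{R/\kk})$ is already an $R$-linear map $\Omega_{R/\kk}\to\Sym^q_R(\Omega_{R/\kk})$ (part of the graded algebra structure of $\Sym_R(\Omega_{R/\kk})$), so $\lambda=\psi\circ(\,\cdot\,\theta)$ is $R$-linear for free and sends $\dd x_{i_1}$ to a unit. The two constructions produce literally the same homomorphism $\dd x_j\mapsto\psi\bigl((\dd X)^{\mu}\dd x_j\bigr)$; your route buys a shorter, presentation-free argument in which the compatibility with the Jacobian relations is absorbed into the well-definedness of the symmetric algebra, while the paper's route makes that compatibility explicit and sets up the Jacobian computations used in Lemmas \ref{lemma:charactOmegafreerank} and \ref{lemma:omeganosummand}. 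Your closing remark about degree bookkeeping is the right point to be careful about, and it works out: $\lambda$ is homogeneous of some degree $e$, hence a degree-zero surjection onto the free graded module $R(e)$, which is what the definition of $\freerank$ in \eqref{eq:freerank} requires.
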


\begin{proof}
	 We define $\mathbb{N}^n_q:=\{\nu\in\mathbb{N}^n: \ \text{such that} \ \nu_1+\cdots+\nu_n=q\}$, and for every $\nu\in\mathbb{N}_q^n$  we use the notation $(\dd X)^{\nu}:=(\dd x_1)^{\nu_1}\cdots (\dd x_n)^{\nu_n}\in\Sym^q_R(\Omega_{R/\kk})$. 
	Moreover, we denote by $e_1,\dots,e_n$ the standard vectors of $\mathbb{N}^n$.
\par Since  $\freerank_R\Sym^q_R(\Omega_{R/\kk})>0$, there exists an $R$-module homomorphism 	 $\varphi:\Sym^q_R(\Omega_{R/\kk})\rightarrow R$, $(\dd X)^{\nu}\mapsto a_{\nu}\in R$ which contains $1$ in the image.
Thanks to Remark \ref{remark-unit}, we may assume without loss of generality that there exists $\mu\in\mathbb{N}^n_{q-1}$ such that $a_{\mu+e_1}=1$.
Applying $\varphi$ to the equations $(\dd X)^{\mu}\left(\frac{\partial f_i}{\partial x_1}\dd x_1+\cdots+\frac{\partial f_i}{\partial x_n}\dd x_n\right)=0$ in $\Sym^q_R(\Omega_{R/\kk})$ for $i=1,\dots,s$, we obtain $s$ relations in $R$ 
\begin{equation}\label{eq:Bruns1}
\frac{\partial f_i}{\partial x_1}+\frac{\partial f_i}{\partial x_2}a_{\mu+e_2}+\cdots+\frac{\partial f_i}{\partial x_n}a_{\mu+e_n}=0.
\end{equation}
Now, consider the map $\psi:P\rightarrow P$ given by $\psi(g)=\frac{\partial g}{\partial x_1}+\frac{\partial g}{\partial x_2}a_{\mu+e_2}+\cdots+\frac{\partial g}{\partial x_n}a_{\mu+e_n}$.
By \eqref{eq:Bruns1} we have $\psi(f_i)\in(f_1,\dots,f_s)$ for all $i=1,\dots,s$, therefore the map $\psi$ is a derivation on $R$, so it corresponds to an $R$-linear map $\Omega_{R/\kk}\rightarrow R$. Evaluating this map on $\dd x_1$, we see immediately that $1$ is in the image, therefore $\freerank_R\Omega_{R/\kk}>0$ as required.
\end{proof}

The following lemma gives a useful characterization of the vanishing of the free rank of $\Omega_{R/\kk}$.

\begin{Lemma}\label{lemma:charactOmegafreerank}
	Let $\kk$ be an algebraically closed field, let $f_1,\dots,f_s\in\kk[x_1,\dots,x_n]$ be homogeneous polynomials and consider the $\kk$-algebra $R=\kk[x_1,\dots,x_n]/(f_1,\dots,f_s)$. 
	Then $\freerank_R\Omega_{R/\kk}>0$ if and only if there is a column in the Jacobi matrix of $f_1,\dots,f_s$ which can be expressed as an $R$-linear combination of the other columns.
\end{Lemma}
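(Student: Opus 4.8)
The plan is to realize $\Omega_{R/\kk}$ as an explicit cokernel via the conormal (second fundamental) exact sequence and then to dualize. Writing $P=\kk[x_1,\dots,x_n]$ and $I=(f_1,\dots,f_s)$, the conormal sequence
\[
I/I^2 \xrightarrow{\ \delta\ } \Omega_{P/\kk}\otimes_P R \longrightarrow \Omega_{R/\kk}\longrightarrow 0,
\]
together with the identification $\Omega_{P/\kk}\otimes_P R\cong R^n$ with basis $\dd x_1,\dots,\dd x_n$ and the surjection $R^s\twoheadrightarrow I/I^2$ sending the $i$-th generator to $f_i$, yields a presentation
\[
R^s \xrightarrow{\ J^{t}\ } R^n \longrightarrow \Omega_{R/\kk}\longrightarrow 0,
\]
where $J^{t}$ is the transpose of the Jacobi matrix $J=(\partial f_i/\partial x_j)$, so that the $i$-th generator of $R^s$ maps to $\dd f_i=\sum_j(\partial f_i/\partial x_j)\,\dd x_j$. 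I write $C_1,\dots,C_n\in R^s$ for the columns of $J$.

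First I would apply $\Hom_R(-,R)$ to this presentation. Left exactness identifies $\Hom_R(\Omega_{R/\kk},R)$ with the kernel of $J\colon R^n\to R^s$, that is, with the $R$-linear relations $\sum_{j=1}^n a_jC_j=0$ among the columns of $J$; explicitly, a tuple $a=(a_1,\dots,a_n)$ with $Ja=0$ corresponds to the homomorphism $\varphi_a\colon\Omega_{R/\kk}\to R$ determined by $\varphi_a(\dd x_j)=a_j$, whose image is the ideal $(a_1,\dots,a_n)$. (Equivalently, under $\Hom_R(\Omega_{R/\kk},R)\cong\mathrm{Der}_{\kk}(R,R)$ a tuple $(a_j)$ defines a derivation of $R$ exactly when $\sum_j a_jC_j=0$, which is the same mechanism already used in the proof of Lemma \ref{lemma:omegasummand}.) Since $R$ is free, such a homomorphism is surjective if and only if its image is the unit ideal, and any surjection onto $R$ splits; hence $\freerank_R\Omega_{R/\kk}>0$ if and only if there is a relation $\sum_j a_jC_j=0$ with $(a_1,\dots,a_n)=R$.

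It then remains to match this with the condition in the statement. For the forward direction, suppose $\sum_j a_jC_j=0$ with $(a_1,\dots,a_n)=R$. Taking $\varphi_a$ homogeneous, so that the $a_j$ are homogeneous (the graded free rank is detected by homogeneous maps, exactly as in Lemma \ref{lemma:omegasummand}), Remark \ref{remark-unit} applies to the unit ideal $(a_1,\dots,a_n)$ and yields an index $j_0$ with $a_{j_0}$ a unit; dividing the relation by $a_{j_0}$ gives $C_{j_0}=-\sum_{j\neq j_0}(a_{j_0}^{-1}a_j)\,C_j$, so the $j_0$-th column is an $R$-linear combination of the others. Conversely, if $C_{j_0}=\sum_{j\neq j_0}r_jC_j$ for some $r_j\in R$, then setting $a_{j_0}=1$ and $a_j=-r_j$ for $j\neq j_0$ produces a relation $\sum_j a_jC_j=0$ whose coefficient ideal contains $1$; the associated map $\varphi_a\colon\Omega_{R/\kk}\to R$ is surjective, so $\freerank_R\Omega_{R/\kk}>0$.

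The main point to get right is the translation between the three equivalent pieces of data: surjections $\Omega_{R/\kk}\to R$, relations among the columns of $J$, and one column lying in the $R$-span of the others. The conormal presentation and its dualization are formal once the exact sequence is written down; the genuinely non-trivial step is the passage from a relation whose coefficients generate the unit ideal to one possessing a \emph{single} unit coefficient. Without grading these conditions need not coincide, and it is precisely here that the homogeneity of $f_1,\dots,f_s$ enters, through Remark \ref{remark-unit}.
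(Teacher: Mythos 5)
Your proof is correct and follows essentially the same route as the paper: present $\Omega_{R/\kk}$ as the cokernel of the transposed Jacobi matrix, identify surjections $\Omega_{R/\kk}\twoheadrightarrow R$ with column relations $\sum_j a_jC_j=0$ whose coefficients generate the unit ideal, and invoke Remark \ref{remark-unit} to replace ``unit ideal'' by ``some coefficient is a unit.'' Your explicit remark that one must work with homogeneous $a_j$ for Remark \ref{remark-unit} to apply is a welcome clarification of a point the paper leaves implicit.
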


\begin{proof}
	Let $M=\left(\frac{\partial f_i}{\partial x_j}\right)_{\substack{i=1,\dots,s\\j=1,\dots,n}}$ be the Jacobi matrix of $f_1,\dots,f_s$, and let $M^T$ be its transpose. From the short exact sequence
	\begin{equation*}
	R^s\xrightarrow{M^T}R^n \rightarrow \Omega_{R/\kk}\rightarrow 0
	\end{equation*}
	we see that a surjection $\Omega_{R/\kk}\twoheadrightarrow R$ is the same as a surjection $R^n\twoheadrightarrow R$ which vanishes on the image of $M^T$.
	To give such a surjection is equivalent to give an element $u\in R^n$  such that  $u\circ M^T = 0$ (where the composition is the usual matrix multiplication) and such that the entries of $u$ generate the unit ideal.
	Thus, this is equivalent to say that there is a linear dependence between the rows of $M^T$ (i.e. the columns of $M$) such that the coefficients generate the unit ideal in $R$.
	By Remark \ref{remark-unit}, it is the same as the existence of a column in the Jacobi matrix which can be expressed as a linear combination of the other columns.
\end{proof}

\begin{Lemma}\label{lemma:omeganosummand}
Let $\kk$ be an algebraically closed field and let $f\in\kk[x_1,\dots,x_n]$ be a homogeneous polynomial of degree $\geq2$ such that $\chara \kk\nmid \deg f$.
Assume that the quotient ring $R=\kk[x_1,\dots,x_n]/(f)$  is a domain with an isolated singularity. Then $\freerank_R\Omega_{R/\kk}=0$.
\end{Lemma}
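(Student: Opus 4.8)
The plan is to apply the characterization of Lemma~\ref{lemma:charactOmegafreerank} in the hypersurface case $s=1$. Here the Jacobi matrix is the single row $\bigl(\tfrac{\partial f}{\partial x_1},\dots,\tfrac{\partial f}{\partial x_n}\bigr)$, so its columns are the individual partial derivatives, and $\freerank_R\Omega_{R/\kk}>0$ would mean precisely that some $\tfrac{\partial f}{\partial x_j}$ is an $R$-linear combination of the remaining ones. It therefore suffices to rule this out for every $j$.

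The key structural input I would extract from the hypotheses is that the partial derivatives form a regular sequence in $P=\kk[x_1,\dots,x_n]$. Since $R$ has an isolated singularity and $\chara\kk\nmid\deg f$, the Euler relation $\deg(f)\cdot f=\sum_i x_i\tfrac{\partial f}{\partial x_i}$ shows that the vertex is the only point at which all partials vanish, so by the Jacobian criterion the ideal $J=(\tfrac{\partial f}{\partial x_1},\dots,\tfrac{\partial f}{\partial x_n})$ is $\mathfrak{m}_P$-primary. Being $n$ homogeneous elements generating an $\mathfrak{m}_P$-primary ideal in the $n$-dimensional Cohen--Macaulay ring $P$, they form a homogeneous system of parameters, hence a regular sequence, which remains regular under any reordering.

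Now I would argue by contradiction. Suppose, after relabeling, that $\tfrac{\partial f}{\partial x_1}=\sum_{i=2}^n c_i\tfrac{\partial f}{\partial x_i}$ in $R$; lifting to $P$ gives $\tfrac{\partial f}{\partial x_1}-\sum_{i=2}^n c_i\tfrac{\partial f}{\partial x_i}=g f$ for some $c_i,g\in P$. Substituting the Euler expression for $f$ and collecting the $\tfrac{\partial f}{\partial x_1}$ terms yields $\bigl(1-\tfrac{g x_1}{d}\bigr)\tfrac{\partial f}{\partial x_1}\in(\tfrac{\partial f}{\partial x_2},\dots,\tfrac{\partial f}{\partial x_n})$, where $d=\deg f$. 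Since $\tfrac{\partial f}{\partial x_1}$ is a nonzerodivisor modulo the ideal generated by the other partials (regular sequence, reordered), this forces $1-\tfrac{g x_1}{d}\in(\tfrac{\partial f}{\partial x_2},\dots,\tfrac{\partial f}{\partial x_n})$. But every $\tfrac{\partial f}{\partial x_i}$ is homogeneous of positive degree $d-1\geq1$, so this ideal is contained in $\mathfrak{m}_P$ and each of its elements vanishes at the origin, whereas $1-\tfrac{g x_1}{d}$ evaluates to $1$ there. This contradiction shows that no partial is a combination of the others, and hence $\freerank_R\Omega_{R/\kk}=0$.

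I expect the main obstacle to be the passage from ``isolated singularity'' to ``the partials form a regular sequence'': one must combine the Jacobian criterion with the Euler relation (this is exactly where the hypothesis $\chara\kk\nmid\deg f$ enters) to conclude that $J$ is $\mathfrak{m}_P$-primary, and then invoke that a system of parameters in a Cohen--Macaulay ring is a regular sequence in every order, which licenses the nonzerodivisor step. Once this is in place, the Euler substitution and the final evaluation-at-the-origin argument are short degree computations.
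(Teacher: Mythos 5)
Your proof is correct and follows the paper's argument almost exactly: reduce via Lemma~\ref{lemma:charactOmegafreerank} to a dependence $\frac{\partial f}{\partial x_1}=\sum_{i\geq 2}a_i\frac{\partial f}{\partial x_i}$ in $R$, lift it to the polynomial ring, substitute the Euler relation, and arrive at $\left(1-\frac{bx_1}{d}\right)\frac{\partial f}{\partial x_1}\in\left(\frac{\partial f}{\partial x_2},\dots,\frac{\partial f}{\partial x_n}\right)$. The only divergence is the endgame: the paper observes that $1-\frac{bx_1}{d}$ is a unit in $P_{\mathfrak{m}}$, deduces that the singular locus is cut out locally by $n-1$ elements, and contradicts the isolated-singularity hypothesis via a Krull-dimension count; you instead first upgrade the isolated-singularity hypothesis to the statement that the partials form a (permutable) regular sequence, cancel the nonzerodivisor $\frac{\partial f}{\partial x_1}$, and contradict the containment $1-\frac{bx_1}{d}\in\mathfrak{m}_P$ by evaluating at the origin. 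Both endgames are valid and use the isolated-singularity hypothesis in equivalent ways (the Jacobian ideal being $\mathfrak{m}_P$-primary); yours front-loads a standard fact about homogeneous systems of parameters in Cohen--Macaulay rings, while the paper's is marginally more economical in that it needs only Krull's height theorem.
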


\begin{proof}
Assume by contradiction that $\freerank_R\Omega_{R/\kk}>0$, then by Lemma \ref{lemma:charactOmegafreerank} there exist $a_2,\dots,a_n\in P=\kk[x_1,\dots,x_n]$ such that 
$\frac{\partial f}{\partial x_1}=\frac{\partial f}{\partial x_2}a_2+\cdots+\frac{\partial f}{\partial x_n}a_n$ in $R$.
 This can be lifted to a similar equation in the polynomial ring $P$, namely 
\begin{equation}\label{eq:Bruns2}
\frac{\partial f}{\partial x_1}=\frac{\partial f}{\partial x_2}a_2+\cdots+\frac{\partial f}{\partial x_n}a_n+bf,
\end{equation} 
 for some $b\in P$.
Since $\deg f$ is invertible in $\kk$, we can insert the Euler equation $f=\frac{1}{\deg f}\left(\frac{\partial f}{\partial x_1}x_1+\cdots+\frac{\partial f}{\partial x_n}x_n \right)$ into \eqref{eq:Bruns2}.
\par  We obtain that $\left(1-\frac{b}{\deg f}x_1\right)\frac{\partial f}{\partial x_1}\in\left(\frac{\partial f}{\partial x_2},\cdots,\frac{\partial f}{\partial x_n}\right)$ in $P$.
 This implies $\frac{\partial f}{\partial x_1}\in\left(\frac{\partial f}{\partial x_2},\cdots,\frac{\partial f}{\partial x_n}\right)$ in $P_{\mathfrak{m}}$, where $\mathfrak{m}=(x_1,\dots,x_n)$.
Therefore the Krull dimension of the singular locus $\left(f, \frac{\partial f}{\partial x_1},\cdots,\frac{\partial f}{\partial x_n}\right) = \left(\frac{\partial f}{\partial x_2},\cdots,\frac{\partial f}{\partial x_n}\right) $ is positive, and this contradicts the hypothesis of $R$ having an isolated singularity at the origin. 
\end{proof}

\begin{Theorem}\label{theorem:freerankiszero}
Let $\kk$ be an algebraically closed field, let $n\geq4$, and let $f\in\kk[x_1,\dots,x_n]$ be a homogeneous polynomial of degree $\geq2$ such that $\chara \kk\nmid \deg f$.
Assume that the quotient ring $R=\kk[x_1,\dots,x_n]/(f)$  is a domain with an isolated singularity.
Then the following facts hold.
\begin{enumerate}
\item For every $q\geq1$, $\freerank_R\Sym^q_R(\Omega_{R/\kk})=0$. 
\item The differential symmetric signature of $R$ is $s_{\dd\sigma}(R)=0$.
\end{enumerate}
\end{Theorem}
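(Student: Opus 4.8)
The plan is to assemble the preceding lemmas into a short contradiction argument for part (1), and then to read off part (2) from a trivial growth estimate on the ranks. For part (1), I would argue by contradiction: suppose that $\freerank_R\Sym^q_R(\Omega_{R/\kk})>0$ for some $q\geq1$. Since our $R$ is exactly of the form $\kk[x_1,\dots,x_n]/(f)$ with $f$ homogeneous of degree $\geq2$, Lemma \ref{lemma:omegasummand} then forces $\freerank_R\Omega_{R/\kk}>0$. But Lemma \ref{lemma:omeganosummand} applies verbatim under our hypotheses (domain, isolated singularity, $\deg f\geq2$, $\chara\kk\nmid\deg f$) and gives $\freerank_R\Omega_{R/\kk}=0$, a contradiction. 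Hence $\freerank_R\Sym^q_R(\Omega_{R/\kk})=0$ for all $q\geq1$, which is part (1).

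For part (2), I first want to identify $\mathcal{C}^q$ with $\Sym^q_R(\Omega_{R/\kk})$. Here the hypothesis $n\geq4$ enters through Corollary \ref{cor:symqisreflexive} (the case $s=1$ of Proposition \ref{prop:symisreflexive}, valid precisely because $n\geq 2s+2=4$): each $\Sym^q_R(\Omega_{R/\kk})$ is already reflexive, so $\mathcal{C}^q=(\Sym^q_R(\Omega_{R/\kk}))^{**}=\Sym^q_R(\Omega_{R/\kk})$, and in particular $\freerank_R\mathcal{C}^q=\freerank_R\Sym^q_R(\Omega_{R/\kk})$. Combining this with part (1) and with $\mathcal{C}^0=R$, the numerator in the definition of $s_{\dd\sigma}(R)$ collapses to a constant: $\sum_{q=0}^N\freerank_R\mathcal{C}^q=\freerank_R R=1$ for every $N$.

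It then remains to check that the denominator $\sum_{q=0}^N\rank_R\mathcal{C}^q$ tends to infinity. Since $R$ is a domain with an isolated singularity and $\kk$ is algebraically closed, $R$ is generically smooth (the smooth locus is the complement of the vertex, hence a dense open set), so $\rank_R\Omega_{R/\kk}=\dim R=n-1\geq3$. Ranks are computed at the generic point, where reflexivization is an isomorphism, so $\rank_R\mathcal{C}^q=\rank_R\Sym^q_R(\Omega_{R/\kk})=\binom{n+q-2}{q}$, which is a polynomial of positive degree $n-2$ in $q$; consequently the partial sums diverge. Therefore
\begin{equation*}
s_{\dd\sigma}(R)=\lim_{N\to+\infty}\frac{1}{\sum_{q=0}^N\rank_R\mathcal{C}^q}=0.
\end{equation*}

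I expect the only real content here to be bookkeeping, since all the analytic work already sits in Lemmas \ref{lemma:omegasummand} and \ref{lemma:omeganosummand}. The one point that must be handled with care is the identification $\mathcal{C}^q=\Sym^q_R(\Omega_{R/\kk})$: this is where $n\geq4$ is genuinely needed, and without it part (1) (stated for the bare symmetric power) would not immediately translate into the value of the signature (stated through the reflexive hull $\mathcal{C}^q$).
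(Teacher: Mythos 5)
Your proposal is correct and follows essentially the same route as the paper: part (1) combines Lemma \ref{lemma:omegasummand} with Lemma \ref{lemma:omeganosummand} (you phrase it as a contradiction, the paper as a direct implication), and part (2) uses Corollary \ref{cor:symqisreflexive} to identify $\mathcal{C}^q$ with $\Sym^q_R(\Omega_{R/\kk})$. The only difference is that you make explicit the divergence of the denominator via the rank count $\binom{n+q-2}{q}$, which the paper leaves implicit.
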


\begin{proof}
\par By Corollary \ref{cor:symqisreflexive}, the module $\Sym^q_R(\Omega_{R/\kk})$ is reflexive, so for the differential symmetric signature it is enough to consider the free rank and the rank of it, that is
\begin{equation*}
s_{\dd\sigma}(R)=\lim_{N\rightarrow+\infty}\frac{\sum_{q=0}^N\freerank_R\Sym^q_R(\Omega_{R/\kk})}{\sum_{q=0}^N\rank_R\Sym^q_R(\Omega_{R/\kk})}.
\end{equation*}
Therefore the first statement implies the second.
To prove \text{(1)}, we observe that by Lemma \ref{lemma:omeganosummand} $\freerank_R\Omega_{R/\kk}=0$, which implies $\freerank_R\Sym^q_R(\Omega_{R/\kk})=0$ by Lemma \ref{lemma:omegasummand}.
\end{proof}

\begin{Ex}
The conclusion of Theorem \ref{theorem:freerankiszero} is not true if $n\leq3$. 
For example, the two-dimensional hypersurface $R=\kk[x,y,z]/(x^2-yz)$ is a quotient singularity $R\cong\kk[u,v]^{\mathbb{Z}/2\mathbb{Z}}$, where the group is acting by negating each variable.
Therefore, by Theorem \ref{theorem:signatureofquotientsingularities} if $\chara\kk\neq2$, we have $s_{\dd\sigma}(R)=\frac{1}{2}\neq0$.
\end{Ex}

One can use Theorem \ref{theorem:freerankiszero} to exhibit an example of a ring where the differential symmetric signature and the F-signature are different.

\begin{Ex}\label{ex:differentsignatures}
Let $\kk$ be an algebraically closed field of characteristic $p>2$, and consider the hypersurface $R=\kk[x,y,z,w]/(xy-zw)$. 
The ring $R$ is the Segre product of two polynomial	rings in two variables, so by a result of Singh \cite[Example 7]{Sin05} the F-signature of $R$ is $s(R)=\frac{2}{3}$.
On the other hand, $R$ is an isolated singularity, so by Theorem \ref{theorem:freerankiszero} we have $s_{\dd\sigma}(R)=0$.
\end{Ex}

\begin{Ex}
Consider the polynomial ring $P=\kk[x_1,\dots,x_6]$ and the generic matrix
\begin{equation*}
X=\begin{pmatrix}
x_1 & x_2 & x_3\\
x_4 & x_5 & x_6
\end{pmatrix}.
\end{equation*}
Let $I=I_2(X)$ be the ideal generated by the $2\times2$ minors, and consider the quotient ring $R=P/I$.
It is well-known that $R$ is an isolated singularity of dimension $4$.
We show that $s_{\dd\sigma}(R)=0$.
A computation with Macaulay 2 \cite{M2} shows that the symmetric algebra $\bigoplus_{q\in\mathbb{N}}\Sym_R^q(\Omega_{R/\kk})$ is Cohen-Macaulay.
Thus, arguing as in the second part of the proof of Proposition \ref{prop:symisreflexive} we conclude that it is $R$-reflexive.
Therefore, by Lemma \ref{lemma:omegasummand} to prove that $s_{\dd\sigma}(R)=0$ is enough to show that $\freerank_R\Omega_{R/\kk}=0$.
Let $f_1=x_1x_5-x_2x_4$, $f_2=x_1x_6-x_3x_4$, $f_3= x_2x_6-x_3x_5$ be a system of generators for $I$. Their Jacobi matrix is 
\begin{equation*}
M= \begin{pmatrix}
x_5 & -x_4 & 0 & -x_2 & x_1 & 0 \\
x_6 & 0 & -x_4 & -x_3 & 0 & x_1 \\
0 & x_6 & -x_5 & 0 & -x_3 & x_2
\end{pmatrix}.
\end{equation*} 
Since no column of $M$ is an $R$-linear combination of the others, by Lemma \ref{lemma:charactOmegafreerank} we have $\freerank_R\Omega_{R/\kk}=0$ as desired.	
\end{Ex}

\par Partly motivated by the previous examples, the first author, Jeffries, and N\'u\~nez-Betancourt \cite{BJNB17} take another characteristic-free approach to the signature. They consider the \emph{principal part signature}, which is given by the asymptotic behavior of the ratio $\freerank_R P^n_{R/\kk}/\rank_RP^n_{R/\kk}$ of the modules $P^n_{R/\kk}$ of $n$-th principal parts of $R$. In this approach all toric rings have positive signature.

\section*{Acknowledgements}
We would like to thank Winfried Bruns for suggesting the ideas of Section \ref{section-hypersurfaces} and his interest in this project.
We thank Hai Long Dao for pointing out a mistake in an earlier version of this paper.

\end{document}